\documentclass[10pt]{article}

\usepackage[latin1]{inputenc}
\usepackage{amssymb}
\usepackage{amsmath}
\usepackage{amscd}
\usepackage{amsthm}
\usepackage{amsfonts}
\usepackage{enumerate}
\usepackage{graphicx}
\usepackage{url}
\usepackage{amssymb}
\usepackage[dvips]{color}
\usepackage{epsfig}
\usepackage{mathrsfs}
\usepackage{indentfirst}
\usepackage{subfig}

\usepackage{xy}
\input xy
\xyoption{all}

\DeclareMathOperator{\DES}{DES}
\DeclareMathOperator{\ASC}{ASC}
\DeclareMathOperator{\des}{des}
\DeclareMathOperator{\asc}{asc}
\DeclareMathOperator{\cover}{cover}
\DeclareMathOperator{\exc}{exc}
\DeclareMathOperator{\maj}{maj}

\DeclareMathOperator{\st}{st}

\DeclareMathOperator{\unexc}{unexc}
\DeclareMathOperator{\bdes}{des}
\DeclareMathOperator{\cyclea}{\emph{cycle}}
\DeclareMathOperator{\flip}{\emph{flip}}
\DeclareMathOperator{\amaj}{amaj}
\DeclareMathOperator{\DEX}{DEX}

\newcommand{\majhat}[1]{\widetilde{\maj_{#1}}}
\newcommand{\majtilde}[1]{\majhat{#1}}
\newcommand{\destilde}[1]{\widetilde{\des_{#1}}}
\newcommand{\bdestilde}[1]{{\widetilde{\bdes_{#1}}'}}
\newcommand{\cycle}[1]{\cyclea_{#1}}

\newcommand{\eqdef}{=} 

\renewcommand{\S}[1]{\mathfrak{S}_{#1}}
\newcommand{\C}[1]{\mathfrak{C}_{#1}}
\newcommand{\invcode}{\textsf{Inv-Code}}
\newcommand{\majcode}[1]{\majtilde{#1}\textsf{-Code}}
\newcommand{\majcodeone}{\widetilde{\maj}\textsf{-Code}}
\newcommand{\cut}{\textsf{cut}}
\newcommand{\ins}{\textsf{insert}}

\newcommand{\simga}{\sigma}

\pdfpagewidth 8.5in
\pdfpageheight 11in
\topmargin -1in
\headheight 0in
\headsep 0in
\textheight 8.5in
\textwidth 6.5in
\oddsidemargin 0in
\evensidemargin 0in
\headheight 75pt
\headsep 0in
\footskip .75in

\newcommand{\inv}{^{-1}}

\newtheorem{theorem}{Theorem}[section]
\newtheorem{lemma}[theorem]{Lemma}
\newtheorem{proposition}[theorem]{Proposition}

\newtheorem{corollary}[theorem]{Corollary}

\newtheorem{mapping}[theorem]{Mapping}

\theoremstyle{definition}
\newtheorem{example}[theorem]{Example}

\numberwithin{equation}{section}

\renewcommand{\inv}{\text{inv}}

\begin{document}

\begin{center} \begin{LARGE} {\sc \bf An Eulerian permutation statistic and generalizations} \vspace{6pt}

\end{LARGE} { \Large \textsc{Travis Hance \hspace{0.75in} Nan Li}}

\end{center}

\begin{abstract}
Recently, the second author studied an Eulerian statistic (called $\cover$) in the context of convex polytopes, and proved an equal joint distribution of $(\cover,\des)$ with $(\des,\exc)$.
 In this paper, we present several direct bijective proofs that $\cover$ is Eulerian, and examine its generalizations and their Mahonian partners. We also present a quasi-symmetric function proof (suggested by Michelle Wachs) of the above equal joint distribution.
\end{abstract}

\section{Introduction}
\label{intro}

Permutation statistics is well explored subject in mathematics.
MacMahon \cite{MacMahon} considered four different statistics for a
permutation: the number of \emph{descents} ($\des$), the number of
\emph{exceedances} ($\exc$), the number of \emph{inversions}
($\inv$), and the \emph{major index} ($\maj$). For a permutation
$\sigma \in \mathfrak{S}_n$:
\begin{align*}
\DES(\sigma) &\eqdef \{i : \sigma(i) > \sigma(i-1)\}\\
\des(\sigma) &\eqdef |\DES(\sigma)|\\
\maj(\sigma) &\eqdef \sum_{i\in\DES(\sigma)}
\end{align*}
Meanwhile, $\exc$ and $\inv$ are defined:
\begin{align*}
\exc(\sigma) &\eqdef \#\{i : \sigma(i) > i\}\\
\inv(\sigma) &\eqdef \#\{(i,j) : i<j\text{ and }\sigma(i) > \sigma(j)\}
\end{align*}

It is first due to MacMahon, using an algebraic proof, that $\exc$ is equidistributed with $\des$, and that $\inv$ is equidistributed with $\maj$, over $\S n$, i.e.,
$$
\sum_{\sigma\in\S n} x^{\des(\sigma)} = \sum_{\sigma\in\S n}
x^{\exc(\sigma)},\,\, \sum_{\sigma\in\S n} x^{\inv(\sigma)} =
\sum_{\sigma\in\S n} x^{\maj(\sigma)}
$$

 Any permutation
statistic that is equidistributed with $\des$ is said to be
\emph{Eulerian} and a permutation statistic that is equidistributed
with $\inv$ is said to be \emph{Mahonian} (see \cite{Foata2}). We
call a  pair of statistics \emph{Euler-Mahonian} if one is Eulerian
and one Mahonian \cite{Foata2}. In the last thirty years, people
have studied many new statistics, especially those that are Eulerian
and Mahonian, for example \cite{Clarke, Foata3, Foata5, Foata,
Rawlings2}. There are also many connections with other areas, for
example \cite{Simion, Stanley}.

\subsection{Cover is Eulerian}

The second author recently defined a new Eulerian permutation
statistic called ``cover"  motivated by her study of the $h$-polynomial of
the half-open hypersimplices \cite{ln}.

There she defined
a poset on $\S n$ representing the triangulation of a hypersimplex,
and $\cover(\sigma)$ as the number of elements $\sigma$ covers in the poset.
It is then shown that $\cover$ can be computed as

\begin{equation}\label{coverdef}
\cover(\sigma) = \#\{i\in[n-1]\mid
\sigma^{-1}(i)<\sigma^{-1}(i+1)\} + \left\{\begin{tabular}{ll}0 & if $\sigma(1) = 1$\\ 1 & if $\sigma(1)\ne 1$\end{tabular}\right.
\end{equation}

For example, part of the poset for $n=4$ is shown below,
where each arrow represents a cover relation in the poset and points toward the larger permutation; the bullets represent other permutations not displayed.

$$
\xy 0;/r.2pc/: (0,0)*{3142}="a";
 (0,25)*{3214}="b";
 (-10,10)*{4213}="c";
 (10,10)*{2143}="d";
 (-27,4)*{4312}="e";
 (27,4)*{1432}="f";
 (-14,-7)*{3241}="g";
 (14,-7)*{2431}="h";
 (0,-15)*{4132}="i";
 (-16,-25)*{4231}="j";
 (16,-25)*{3421}="k";
 (-28,-11)*{\bullet}="gg";
 (28,-11)*{\bullet}="hh";
 (-30,-27)*{\bullet}="jj";
 (30,-27)*{\bullet}="kk";
 {\ar "c";"b"};%
 {\ar "d";"b"};%
 {\ar "e";"c"};%
 {\ar "a";"c"};%
 {\ar "a";"d"};%
 {\ar "f";"d"};%
 {\ar "g";"e"};%
 {\ar "g";"a"};%
 {\ar "h";"a"};%
 {\ar "h";"f"};%
 {\ar "i";"a"};%
 {\ar "j";"g"};%
 {\ar "j";"i"};%
 {\ar "k";"i"};%
 {\ar "k";"h"};%
 {\ar@{->} "jj";"j"};%
  {\ar@{->} "hh";"h"};%
   {\ar@{->} "kk";"k"};%
    {\ar@{->} "gg";"g"};%
\endxy
$$

We can see that the cover of $\sigma = 3~1~4~2$ is $3$, either by looking at the poset or using definition (\ref{coverdef}), and seeing that we have $\sigma^{-1}(2) > \sigma^{-1}(1) + 1$, $\sigma^{-1}(4) > \sigma^{-1}(3) + 1$, and $\sigma(1) = 3\ne 1$.
Or, if we take $\sigma = 2~4~3~1$, we see that $\cover(\sigma) = 2$, because $\sigma^{-1}(3) > \sigma^{-1}(2) + 1$, and $\sigma(1) = 2 \ne 1$.

We do not go into details about the poset or the hypersimplices here; the definition (\ref{coverdef}) will be enough for this paper. However; using two different shellable triangulations of the half-open
hypersimplices, the following identity can be proved.

\begin{proposition}\cite{ln}\label{ed}
\begin{center}$\displaystyle\sum_{\sigma\in\mathfrak{S}_n}t^{\des(\sigma)}x^{\cover(\sigma)}= \sum_{\sigma\in\mathfrak{S}_n}t^{\exc(\sigma)}x^{\des(\sigma)},$\end{center}
\end{proposition}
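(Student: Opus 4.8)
Since the identity asserts the joint equidistribution $(\des,\cover)\sim(\exc,\des)$ on $\S{n}$, the cleanest target is a bijection $\phi\colon\S{n}\to\S{n}$ with $\exc(\phi(\sigma))=\des(\sigma)$ and $\des(\phi(\sigma))=\cover(\sigma)$; relabelling the right-hand sum by $\tau=\phi(\sigma)$ then turns it into the left-hand sum. My plan is to assemble $\phi$ from a classical piece plus one genuinely new piece. The classical piece is Foata's cycle-to-word bijection $w\mapsto\widehat w$ (write $w$ in cycle form with each cycle led by its largest element and the cycles ordered by increasing leaders, then erase the parentheses), which is a bijection of $\S{n}$ carrying $\des$ to $\exc$, i.e.\ $\exc(\widehat w)=\des(w)$. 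Substituting $\sigma=\widehat w$ on the right gives
\[
\sum_{\sigma\in\S{n}}t^{\exc(\sigma)}x^{\des(\sigma)}=\sum_{w\in\S{n}}t^{\des(w)}x^{\des(\widehat w)},
\]
so the proposition reduces to the refined claim that, within each fixed value of $\des$, the statistics $\cover(w)$ and $\des(\widehat w)$ are equidistributed on $\S{n}$.

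To prove this refined statement I would work with the form of $\cover$ underlying the worked examples,
\[
\cover(w)=\#\{\,i\in[n-1]\mid w^{-1}(i+1)>w^{-1}(i)+1\,\}+[\,w(1)\neq1\,],
\]
which counts the values $i$ whose successor $i+1$ lies more than one place to its right, plus a correction for the first letter. The goal is a descent-preserving bijection $\beta$ of $\S{n}$ with $\cover(\beta(w))=\des(\widehat w)$; I would build it by inserting the largest letter $n$ and tracking both statistics, since $\des$ obeys the standard Eulerian gap rule under such insertions, and then matching the two induced recursions term by term against the base case $n=1$. Equivalently one can try to match the bivariate generating functions $\sum_w t^{\des(w)}x^{\cover(w)}$ and $\sum_w t^{\des(w)}x^{\des(\widehat w)}$ by a single recursion in $n$.

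The hard part is exactly that $\cover$ is not a classical descent statistic: it couples a \emph{non-adjacency} condition on the inverse word with the first-letter correction, so it does not pass cleanly through the fundamental bijection. The most optimistic guess, $\des(\widehat w)=\cover(w)$ pointwise, already fails --- at $w=213$ one has $\cover(w)=2$ but $\des(\widehat w)=1$ --- so only the distributions \emph{within each descent class} coincide, and reconciling the gap condition with an honest descent set is the crux of the matter.

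Finally, a cleaner alternative (the route Wachs suggests) replaces the explicit bijection with quasisymmetric functions. One encodes a descent statistic through Gessel's fundamental quasisymmetric functions $F_{n,S}$ and recovers its generating function through the principal specialization
\[
\sum_{m\ge0}F_{n,S}(1^m)\,z^m=\frac{z^{\,|S|+1}}{(1-z)^{n+1}},
\]
which depends on $S$ only through $|S|=\des$. The exceedance side is then governed by the Shareshian--Wachs Eulerian quasisymmetric functions $\sum_{\sigma}t^{\exc(\sigma)}F_{n,\DEX(\sigma)}$, whose decisive feature is that they are honestly \emph{symmetric}; it is this symmetry, and not any term-by-term coincidence of descent sets, that forces both sides to have the same specialization. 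I would expect the real labor here to lie in identifying the quasisymmetric refinement of $\cover$ compatible with this symmetric function --- a naive lift is already inconsistent at $n=3$, where the $F$-expansions of $\sum_w x^{\cover(w)}F_{n,\DES(w)}$ and $\sum_\sigma x^{\exc(\sigma)}F_{n,\DES(\sigma)}$ differ.
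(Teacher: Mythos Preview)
Your proposal outlines two strategies but completes neither, so as written it is not a proof.

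For the bijective route, you reduce correctly to finding a descent-preserving bijection $\beta$ with $\cover(\beta(w))=\des(\widehat w)$, but ``I would build it by inserting $n$ and matching recursions'' is a hope, not an argument; you never write down the recursion for $\cover$, never show it matches that of $\des(\widehat w)$ within a descent class, and never define $\beta$. Everything substantive is still to be done.

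For the quasisymmetric route you have the right toolbox but are blocked at exactly the point the paper resolves. The correct lift is \emph{not} $\sum_w x^{\cover(w)}F_{n,\DES(w)}$; rather, one uses the Shareshian--Wachs identity
\[
\sum_{\sigma\in\S n}F_{n,\DEX(\sigma)}\,t^{\exc(\sigma)}=\sum_{\sigma\in\S n}F_{n,\DES_2(\sigma)}\,t^{\des(\sigma^{-1})},
\]
so the set attached to the cover side is $\DES_2$ (equivalently $\ASC_2$ after a palindromic flip), and the companion statistic is $\des(\sigma^{-1})$, not $\des(\sigma)$. Your $n=3$ failure is exactly the symptom of lifting with the wrong subset. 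After specializing, you still are not finished: the specialization produces $p^{|\ASC_2(\sigma)|}$ and $p^{|\DEX(\sigma)|}$, whereas the target involves $\widetilde{\asc_2}$ and $\des$, each differing by a $[\sigma(1)\neq 1]$ correction. The paper closes this gap by a short induction on $n$, peeling off the $\sigma(1)=1$ summand and reducing it to the $n-1$ case. That induction is a genuine step you have not supplied.
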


From the above result, we can see that $\cover$ is Eulerian. We will
provide several more direct proofs that $\cover$ is Eulerian in the paper. In Section \ref{euleriansection} we will compare $\destilde2$ with $\exc$, and in Section \ref{mahoniansection}, we will use bijections motivated by the Mahonian statistics to show that $\destilde2$ is equidistributed with $\des$.

\subsection{Mahonian partner}

Consider the following variants on the statistic $\des$. The simplest is the number of \emph{ascents}, denoted $\asc$:
\begin{align*}
\ASC(\sigma) &= \{i : \sigma(i) < \sigma(i+1)\}\\
\asc(\sigma) &= |\ASC(\sigma)|
\end{align*}
We also consider \emph{$2$-descents}:
\begin{align*}
\DES_2(\sigma) &= \{i : \sigma(i) \ge \sigma(i+1) + 2\}\\
\des_2(\sigma) &= |\DES_2(\sigma)|
\end{align*}
We also define a major version,
\begin{align*}
\maj_2(\sigma) &= \sum_{i \in \DES_2(\sigma)} i
\end{align*}
Finally, we define the following two variants on $\des_2$ and $\maj_2$:
\begin{align}
\label{des2def}\destilde2(\sigma) &= \left\{\begin{tabular}{ll}$\des_2(\sigma)$ & if $\sigma(1) = n$\\ $\des_2(\sigma) + 1$ & if $\sigma(1) \ne n$\end{tabular}\right.\\
\label{maj2def}\majhat2(\sigma) &= \maj_2(\sigma) + \asc(\sigma^{-1})
\end{align}
It turns out that $\destilde2$ has a very natural relationship with cover. For any permutation $\sigma \in \S n$, define $\sigma' \in \S n$ by $\sigma'(i) = n+1 - \sigma^{-1}(i)$. Then it is easy to see that $\cover(\sigma) = \destilde2 (\sigma)$. Thus, for the rest of the paper we will be considering $\destilde2$ rather than cover, due to its close analog with $\des$. In Section \ref{mahoniansection}, we will show that
\begin{equation}\label{desmajdes2maj2}\displaystyle \sum_{\sigma\in\S n}x^{\des(\sigma)}y^{\maj(\sigma)} = \sum_{\sigma\in\S n}x^{\destilde2(\sigma)}y^{\majhat2(\sigma)}\end{equation}


%

From the above result, we can see that $\majhat2$ is a Mahonian partner
for $\destilde2$. In this paper, we will provide a bijective proof for
that in Section \ref{mahoniansection}. The bijection is mainly the one used in \cite{Rawlings} which shows that the statistic we will call $\majhat k$ is Mahonian, but we will also use techniques from \cite{Skandera}. To define $\majhat k$, we first define $k$-analogs of $\des$: The statistic $\majhat k$ is defined as follows:
\begin{align}
\DES_k(\sigma) &= \{i : \sigma(i) \ge \sigma(i+1) + k\}\\
\des_k(\sigma) &= |\DES_k|\\
\destilde k(\sigma) &= \left\{\begin{tabular}{ll}$\des_k(\sigma)$ & if $\sigma(1) > n + 1 - k$\\ $\des_k(\sigma) + 1$ & otherwise\end{tabular}\right.
\end{align}
and then we define
\begin{align}
\maj_k(\sigma) &= \sum_{i \in \DES_k(\sigma)}i\\
\majhat k (\sigma) &= \maj_k(\sigma) + \{(i, j) : i < j\text{ and }\sigma(i) < \sigma(j) < \sigma(i) + k\}\label{majkdef}
\end{align}
It should be noted that these definitions are consistent with those of $\des$, $\des_2$, $\destilde 2$, $\maj_2$, and $\majhat2$.



We will also prove a generalization of (\ref{desmajdes2maj2}), namely that
\begin{center}
$\displaystyle \sum_{\sigma \in \S n}x^{\des_k(\sigma)} y^{\majhat k(\sigma)} = \sum_{\sigma \in \S n}x^{\destilde{k+1}(\sigma)}y^{\majhat{k+1}(\sigma)}$.
\end{center}

\section{A bijection for $\exc$ and $\destilde2$}
\label{euleriansection}

\subsection{A map taking $\exc$ to $\des$}
\label{excanddes}

In order to show that $\destilde2$ is Eulerian, we will present a bijection which takes the statistic $\exc$ to $\destilde2$. In order to understand the motivation for this bijection, we first present a bijection which takes $\exc$ to $\des$, given in \cite{EC1}, which we will call $\cycle{0}$. It is also known as the Foata map \cite{Foata3}.

For a permutation $\sigma \in \S n$, define the \emph{standard cycle notation} of $\sigma$ to be
\begin{center}
$\sigma = (a_{1,1}~ a_{1,2}~\cdots ~ a_{1, \ell(1)})(a_{2,1}~ a_{2,2}~\cdots ~ a_{2, \ell(2)})~\cdots ~(a_{k,1}~ a_{k,2}~\cdots ~ a_{k, \ell(k)})$
\end{center}
where (i) $\sigma(a_{i,j}) = a_{i,j+1}$ for $1\le i\le k$, and $1\le j \le \ell(i)
 - 1$, (ii) $\sigma(a_{i, \ell(i)}) = a_{i,1}$, (iii) each number in $\{1,...,n\}$ is equal to $a_{i,j}$ for exactly one pair $(i,j)$, and (iv) $a_{1,1} < a_{2,1} < \cdots < a_{k,1}$, but $a_{i,1} > a_{i,j}$ for all $i$ and $j\ge 2$. For example, the standard cycle notation of the permutation $\sigma = 4~5~6~1~2~7~8~3$ is $(4 1)(5 2)(8 3 6 7)$.

Now, given a permutation $\sigma$, write the standard cycle notation of $\sigma^{-1}$, and erase the parentheses. This gives a permutation $\pi$. Thus, if we take $\sigma = 3~4~1~5~2$, the standard cycle notation of $\sigma^{-1}$ is $(3~1)(5~4~2)$, so $\pi = 3~1~5~4~2$.

We can reverse this map as well. Given $\pi$, there is a unique way to split it into cycles such that it satisfies condition (iv) of standard cycle notation. We simply find each element of $\pi$ that is larger than each element before it. Such elements are known as the \emph{left-to-right} maxima of $\pi$ \cite{EC1}. We let the left-to-right maxima be the first elements of the cycles. For instance, if $\pi = 3~4~2~1~5$, then the left-to-right maxima are $3$, $4$, and $5$, so we partition it into $(3)(4~2~1)(5)$, and then we get $\sigma = 2~4~3~1~5$.

It is not hard to see that for any $\sigma \in \S n$, we have $\exc(\sigma) = \des(\pi)$, as in the example.

\subsection{A map between $\exc$ and $\destilde2$}

We want to create a bijection between $\exc$ and $\destilde2$. It will be slightly more convenient to construct the bijection $\exc$ to a slightly different statistic. Define
\begin{center}
$\bdestilde k(\sigma) = \left\{\begin{tabular}{ll}$\des_k(\sigma)$ & if $\sigma(n) < k$\\ $\des_k(\sigma) + 1$ & if $\sigma(n) \ge k$\end{tabular}\right.$
\end{center}
Let $\flip$ be the bijection which takes $\sigma$ to $\sigma'$, defined by $\sigma'(i) = n+1 - \sigma(n+1-i)$. By comparing the definition of $\bdestilde k$ with the definition of $\destilde k$ (see (\ref{des2def})), we see that the involution $\flip$ takes one statistic to the other (i.e., $\destilde k(\sigma) = \bdestilde k(\sigma')$). We will construct a bijection based on the Foata map which takes $\exc$ to $\bdestilde 2$. By composing this bijection with $\flip$, we obtain a bijection that takes $\exc$ to $\destilde2$. Note that while we could have chosen this as the definition of $\destilde k$ to begin with, it is our original definition that will be more natural in Section \ref{mahoniansection}.

Now we describe our bijection, which we will call $\cycle{-1}$. Take any permutation $\sigma\in\S n$. Construct a graph $G$ as follows: let its vertices be the set $\{0,...,n\}$. For each $i\in\{1,...,n\}$, draw a directed edge from $\sigma(i)$ to $i-1$. For instance, the graph $G$ for $\sigma = 7~8~3~5~1~2~4~9~6$ is given in Figure \ref{graphfig}.

\begin{figure}
\begin{center}\includegraphics{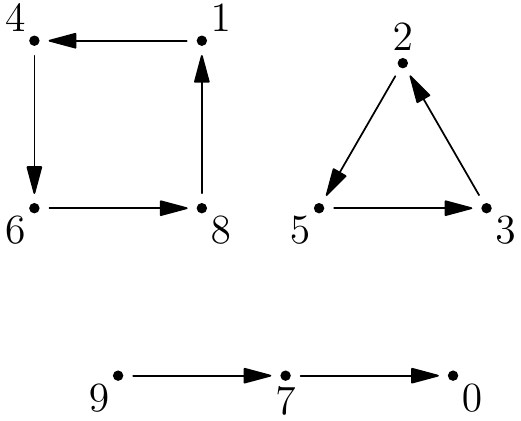}\end{center}
\caption{The graph $G$ for $\sigma = 7~8~3~5~1~2~4~9~6$ in the $\cycle{-1}$ bijection}
\label{graphfig}
\end{figure}

Each vertex of $G$ except $0$ will have exactly one outgoing edge, and each vertex except $n$ will have exactly one incoming edge.
Thus, $G$ consists of some cycles and a path from $n$ to $0$. Define the \emph{standard cycle-path notation} of $G$ as follows: first write the standard cycle notation (as defined in Section \ref{excanddes}) of the cycles, and then write the path from $n$ to $0$. For example, the standard cycle-path notation of $G$ in our example is $(5~3~2)(8~1~4~6)(9~7~0)$. Finally, we let $\pi$ be the permutation obtained by removing the parentheses and removing the trailing $0$. Hence, in this case, $\pi = 5~3~2~8~1~4~6~9~7$. The defines our mapping $\cycle{-1}$. Note that we can reverse this map in the same way we reversed $\cycle{0}$, the foata map; that is, we just partition $\pi$ into the cycles, and one path, by looking at the left-to-right maxima and making these the starts to the cycles and the path. (In particular $n$, will be the first element of the path).

We want to show that if $\cycle{-1}$ takes $\sigma$ to $\pi$, then $\exc(\sigma) = \bdestilde2(\pi)$, thus proving that $\destilde2$ is Eulerian. In fact, we will prove a stronger statement. Define
\begin{center}$\exc_k(\sigma) = \{i : \sigma(i) \ge i + k\}$\end{center}
Then,

\begin{proposition}\label{excdesbij} For $k\ge 1$, if $\cycle{-1}$ maps $\sigma$ to $\pi$, then $\exc_k(\sigma) = \bdestilde{k+1}(\pi)$.\end{proposition}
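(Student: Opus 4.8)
The plan is to track the statistic through the graph $G$ itself, reading off both $\exc_k(\sigma)$ and $\bdestilde{k+1}(\pi)$ as counts of edges of $G$ that satisfy one common ``large gap'' condition, and then matching the two counts edge by edge. First I would reinterpret $\exc_k$ directly on the edges. Each edge of $G$ runs from $\sigma(i)$ to $i-1$ for some $i\in\{1,\dots,n\}$; writing $a=\sigma(i)$ and $b=i-1$, we have $a-b=\sigma(i)-i+1$, so the edge satisfies $a-b\ge k+1$ exactly when $\sigma(i)\ge i+k$, i.e.\ when $i$ contributes to $\exc_k(\sigma)$. Since $G$ has exactly one outgoing edge for each $i$, the number of edges with $a-b\ge k+1$ equals $\exc_k(\sigma)$, and the whole argument reduces to recounting these large-gap edges according to how they sit in the standard cycle-path notation.

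Next I would classify the edges of $G$ by their appearance in $\pi$. They split into the \emph{internal} edges, whose two endpoints are consecutive entries of $\pi$, and the \emph{break} edges: one closing edge $b_\ell\to b_1$ for each cycle $(b_1\cdots b_\ell)$, together with the terminal edge of the path from its last vertex to $0$. For an internal edge from $a$ to $b$, the entry $b$ immediately follows $a$ in $\pi$, so $a-b\ge k+1$ says precisely that this adjacency is a $(k+1)$-descent; hence the internal large-gap edges are counted by $\des_{k+1}(\pi)$, \emph{provided} no $(k+1)$-descent of $\pi$ arises at a break. This is the crux of the proof: each cycle's first element is its maximum and the cycles are listed with strictly increasing first elements, while the path begins with $n$, so every entry of $\pi$ that opens a new cycle or the path is a left-to-right maximum of $\pi$. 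Consequently each break in $\pi$ is an ascent and contributes no descent, so indeed $\des_{k+1}(\pi)$ counts exactly the internal large-gap edges.

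It then remains to account for the break edges that are themselves large-gap. Every cycle-closing edge runs from $b_\ell$ to the cycle maximum $b_1>b_\ell$, so $a-b<0$ and such an edge is never large-gap. The only remaining candidate is the terminal path edge into $0$; since the edge entering $0$ is the one from $\sigma(i)$ with $i-1=0$, its source is $\sigma(1)$, and because the trailing $0$ is deleted when forming $\pi$, this source is exactly the last entry $\pi(n)$. This edge is large-gap iff $\pi(n)=\sigma(1)\ge k+1$, which is precisely the indicator added in the definition of $\bdestilde{k+1}$. Adding the two contributions gives $\exc_k(\sigma)=\des_{k+1}(\pi)+[\pi(n)\ge k+1]=\bdestilde{k+1}(\pi)$, as claimed.

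I expect the main obstacle to be the bookkeeping around deleting the trailing $0$: one must verify both that this deletion suppresses exactly one potential descent and that the suppressed source vertex equals $\pi(n)=\sigma(1)$, so that the correction term in $\bdestilde{k+1}$ restores precisely the lost count. The companion delicate point is the left-to-right maxima argument that rules out descents at the cycle and path breaks; both should be stated carefully, ideally checked against the running example $\sigma=7~8~3~5~1~2~4~9~6$ with $\pi=5~3~2~8~1~4~6~9~7$, where for $k=1$ one finds $\exc_1(\sigma)=4=\des_2(\pi)+1=\bdestilde2(\pi)$.
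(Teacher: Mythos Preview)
Your proof is correct and follows essentially the same approach as the paper: both arguments identify the $k$-exceedances of $\sigma$ with the ``large-gap'' edges of $G$, use the left-to-right maxima structure of the standard cycle-path notation to show that cycle/path breaks are never descents, and single out the terminal edge into $0$ as the source of the correction term in $\bdestilde{k+1}$. The only cosmetic difference is that the paper appends the trailing $0$ to form $\hat\pi$ so that the terminal edge becomes an honest $(k+1)$-descent of $\hat\pi$, whereas you keep $\pi$ and treat that edge separately as the indicator $[\pi(n)\ge k+1]$.
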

For $k=1$, we get that $\cycle{-1}$ maps $\exc$ to $\bdestilde2$.
\begin{proof}[Proof of Proposition \ref{excdesbij}]
Suppose we extend $\pi$ to $\hat{\pi}$ such that it has a $0$ at the end (i.e., $\hat{\pi}(n+1) = 0$), as if we never removed $0$ from the end of the standard cycle-path notation. Then, the case where $\hat{\pi}(n) \ge k+1$ becomes a $(k+1)$-descent. We will show that we can find a mapping between the $k$-exceedances of $\sigma$ and the $(k+1)$-descents of this extended $\hat{\pi}$.

Suppose we have a $k$-exceedance $\sigma(i) \ge i + k$. In $G$, $\sigma(i)$ points to $i-1$, which is at most $\sigma(i) - (k+1)\le\sigma(i)$. Now, $i-1$ is not the largest element in its cycle or path, since $\sigma(i)$ is in the same path, so in the standard cycle-path notation, $\sigma(i)$ comes directly before $i-1$, giving a $(k+1)$-descent.

Conversely, suppose we have a $(k+1)$-descent $\hat\pi(i) \ge \hat\pi(i+1) + k+1$. Now, it cannot be the case that $\hat\pi(i+1)$ is at the start of a cycle or path when $\hat\pi$ is decomposed into cycles and a path, since $\hat\pi(i)$ is larger than $\hat\pi(i+1)$. Therefore, $\hat\pi(i)$ and $\hat\pi(i+1)$ are in the same cycle or path, and so $\sigma(\hat\pi(i+1) + 1) = \hat\pi(i)$. (Recall that $G$ is such that $\sigma(j)$ points to $j-1$.) This is a $k$-exceedance.
\end{proof}

We will now show another interesting property of this map. Define the statistic $\unexc$ by
\begin{center}$\unexc(\sigma) = \#\{i : \sigma(i) < i\}$\end{center}
Then,
\begin{proposition}\label{unexcascbij}
If $\cycle{-1}$ maps $\sigma$ to $\pi$, then $\unexc(\sigma) = \asc(\pi)$.
\end{proposition}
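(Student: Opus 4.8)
The plan is to read off both statistics directly from the graph $G$ that defines the map $\cycle{-1}$. First I would reinterpret $\unexc$ through $G$. The edge attached to index $j$ runs $\sigma(j)\to j-1$, so its source is at most its target exactly when $\sigma(j)\le j-1$, i.e. exactly when $j$ is an unexceedance. Since the $n$ indices are in obvious bijection with the $n$ edges of $G$, the quantity $\unexc(\sigma)$ equals the number of edges of $G$ whose source is $\le$ its target; call these the \emph{weak} edges. The task is then to match weak edges of $G$ against ascents of $\pi$.

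Next I would classify the edges of $G$ by their role in the standard cycle-path notation into three kinds: (a) adjacencies between consecutive entries inside one block (a cycle or the path); (b) the closing edge of each cycle, running from the cycle's last entry back to its first; and (c) the single edge of the path that points into the vertex $0$. For kind (a): an edge written as $a$ immediately before $b$ is weak iff $a<b$ (the two entries are distinct), which is exactly an ascent of $\pi$ at that position, so the weak edges of kind (a) correspond precisely to the within-block ascents of $\pi$. For kind (b): by condition (iv) of standard cycle notation the first entry of a cycle is its maximum, so each closing edge has source $\le$ target and is therefore weak; there are exactly $m$ of them, one per cycle, where a singleton cycle appears as a self-loop $v\to v$. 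For kind (c): the target is $0$, which is smaller than the positive source, so this edge is never weak. Combining, $\unexc(\sigma)=(\text{within-block ascents of }\pi)+m$.

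Finally I would compute $\asc(\pi)$ on its own by splitting the adjacent pairs of $\pi$ into within-block pairs and between-block boundaries. At a between-block boundary the right entry starts a new block, hence is a left-to-right maximum of $\pi$ and exceeds everything before it, in particular its left neighbor; so every between-block boundary is an ascent. As $\pi$ consists of $m+1$ blocks ($m$ cycles followed by the path, with the trailing $0$ deleted), there are $m$ such boundaries, giving $\asc(\pi)=(\text{within-block ascents of }\pi)+m$. Comparing the two expressions gives $\unexc(\sigma)=\asc(\pi)$.

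The main obstacle I expect is the bookkeeping at the degenerate vertices together with the role of equality in "weak". One must confirm that $n$ is always the head of the path and $0$ always its tail, so that neither lies on a cycle and kind (c) is genuinely a single edge into $0$. Most delicately, the definition of weak edge must use $\le$ rather than $<$: a self-loop $v\to v$ of $G$, which occurs exactly when $\sigma(v+1)=v$, is a singleton cycle whose closing edge has equal source and target, and it must still be counted both as an unexceedance (since $v<v+1$) and among the $m$ weak closing edges. Beyond this, the only real content is the clean coincidence that the number of closing edges and the number of between-block boundaries are both $m$.
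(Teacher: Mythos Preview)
Your argument is correct and rests on the same structural analysis as the paper's: both proofs work through the decomposition of $G$ into cycles and the $n$-to-$0$ path and exploit that block boundaries in $\pi$ are always ascents while cycle-closing edges are always ``weak''. The organization differs, however. The paper constructs an explicit two-way correspondence between individual unexceedances of $\sigma$ and individual ascents of $\pi$, handling by cases whether the relevant edge lies inside a block or closes a cycle (and, in the reverse direction, whether an ascent is within a block or at a block boundary). You instead classify the $n$ edges of $G$ once into kinds (a), (b), (c), count the weak ones to get $\unexc(\sigma)=(\text{within-block ascents})+m$, and separately count $\asc(\pi)$ the same way. Your formulation is tidier on the bookkeeping---the singleton-cycle/self-loop issue and the special role of the edge into $0$ are absorbed cleanly into the edge classification---while the paper's version has the advantage of exhibiting the bijection elementwise rather than only the equality of cardinalities.
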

\begin{proof}
We will map unexceedances of $\sigma$ to the ascents of $\pi$.

Suppose $\sigma(i) < i$ is an unexceedance. If $\sigma(i) = i-1$, then $\sigma(i)$ points to $\sigma(i)$ in $G$. In standard cycle-path notation, $\sigma(i)$ will be in a cycle by itself, and the first element of the next cycle or path will be larger than it, so this is an ascent. If $\sigma(i) < i-1$, then we have $\sigma(i)$ pointing to $i-1$, which is larger than $\sigma(i)$. Now, $\sigma(i)$ and $i-1$ will be in the same cycle or path. Either $\sigma(i)$ comes directly before $i-1$, in which case we have an ascent immediately; otherwise, $\sigma(i)$ is at the end of some cycle and $i-1$ is at the beginning. But we always have an ascent from one cycle to the next, so even in this case, $\sigma(i)$ is still part of an ascent.

Now we do the reverse direction; suppose $\pi(i) < \pi(i+1)$. We have two cases. If $\pi(i)$ and $\pi(i+1)$ are in the same cycle or path when $\pi$ is partitioned, then we simply have $\sigma(\pi(i+1) + 1) = \pi(i)<\pi(i+1)+1$, which is an unexceedance. Otherwise, $\pi(i)$ is at the end of its cycle, and $\pi(i+1)$ is at the beginning of the next cycle or path. In $G$, we know $\pi(i)$ will point to the element at the beginning of its own cycle, which is at least as large as $\pi(i)$ (possibly the same, if the cycle is of length one). Call the element at the beginning of this cycle $\pi(j)$. Then we will have $\sigma(\pi(j) + 1) = \pi(i)<\pi(j)+1$, which is again an unexceedance.
\end{proof}

Now, not only do we know that $\destilde2(\sigma)$ is Eulerian, but the following corollaries are immediate from the Propositions \ref{excdesbij} and \ref{unexcascbij}:
\begin{corollary}
The multivariate statistic $(\unexc,\exc, \exc_2, \exc_3 ...)$ is equidistributed with the multivariate statistic $(\asc,\destilde2, \destilde3, \destilde4...)$.
\end{corollary}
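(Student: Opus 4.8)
The plan is to exploit the fact that Propositions~\ref{excdesbij} and~\ref{unexcascbij} are not two unrelated equidistribution results but two statements about the \emph{same} map $\cycle{-1}$. Because one bijection $\sigma \mapsto \pi = \cycle{-1}(\sigma)$ simultaneously sends $\unexc(\sigma)$ to $\asc(\pi)$ and sends $\exc_k(\sigma)$ to $\bdestilde{k+1}(\pi)$ for every $k \ge 1$, the whole tuple of source statistics is carried onto the whole tuple of target statistics by this single map. Joint equidistribution is then automatic: fixing any vector of values, the $\sigma$ realizing it on the left are in bijection (via $\cycle{-1}$) with the $\pi$ realizing the matching vector on the right. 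So I would first assemble the componentwise matchings, reading off $\unexc(\sigma) = \asc(\pi)$ from Proposition~\ref{unexcascbij} for the first coordinate, and $\exc(\sigma) = \exc_1(\sigma) = \bdestilde{2}(\pi)$, $\exc_2(\sigma) = \bdestilde{3}(\pi)$, and generally $\exc_k(\sigma) = \bdestilde{k+1}(\pi)$ from Proposition~\ref{excdesbij} for the remaining coordinates.

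The only genuine discrepancy is that these propositions produce the $\bdestilde{\,\cdot}$ family, whereas the corollary is stated in terms of the $\destilde{\,\cdot}$ family; so the one real step is to convert between them without disturbing the $\asc$ coordinate. For this I would post-compose with $\flip$, setting $\Phi = \flip \circ \cycle{-1}$. The excerpt already records that $\flip$ is an involution carrying $\bdestilde{k+1}$ to $\destilde{k+1}$, which upgrades each identity to $\exc_k(\sigma) = \destilde{k+1}(\Phi(\sigma))$. It then remains to verify that $\flip$ fixes $\asc$: since $\flip$ is the reverse--complement $\pi \mapsto (\,n+1-\pi(n+1-i)\,)_i$, a one-line index substitution shows that an ascent of $\flip(\pi)$ at position $i$ corresponds to an ascent of $\pi$ at position $n-i$, so $\asc(\flip(\pi)) = \asc(\pi) = \unexc(\sigma)$. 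Hence $\Phi$ realizes all the required identities at once, and the joint distributions agree.

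I do not expect a real obstacle here, precisely because the substantive work was already done inside the two propositions; the corollary is the observation that those proofs share one bijection. The only thing needing care is the bookkeeping of the last paragraph — checking that $\flip$ acts as the identity on the $\asc$ coordinate while converting the descent family correctly — and that is routine. If one preferred to avoid $\flip$ entirely, an alternative is to restate the corollary in terms of the $\bdestilde{\,\cdot}$ statistics, for which $\cycle{-1}$ alone already gives everything; I would mention this only as a remark, since the $\destilde{\,\cdot}$ version is the one consistent with the rest of the paper.
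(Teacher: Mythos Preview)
Your proposal is correct and follows exactly the paper's approach: compose $\cycle{-1}$ with $\flip$ and observe that this single bijection carries the whole tuple $(\unexc,\exc,\exc_2,\dots)$ to $(\asc,\destilde2,\destilde3,\dots)$. You are in fact more careful than the paper, which simply asserts the composite works without explicitly checking that $\flip$ preserves $\asc$; your verification of this via the reverse--complement index substitution fills a small gap the paper leaves implicit.
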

\begin{proof}The first statistic is taken to the second by applying the mapping $\cycle{-1}$ followed by the mapping of $\flip$. For instance, continuing with the example of $\sigma = 7~8~3~5~1~2~4~9~6$, we apply $\cycle{-1}$ to get $\pi = 5~3~2~8~1~4~6~9~7$ (as stated previously), and then applying $\flip$ we get $\pi' = 3~1~4~6~9~2~8~7~5$.\end{proof}

\begin{corollary}
$(\asc, \destilde2)$ has the same distribution as $(\destilde2, \asc)$
\end{corollary}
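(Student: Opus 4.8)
The plan is to deduce the self-symmetry of $(\asc,\destilde2)$ from the manifest self-symmetry of the pair $(\unexc,\exc)$, transporting between the two descriptions via the bijection already in hand. Let $g=\flip\circ\cycle{-1}$ be the composite bijection $\S n\to\S n$ used in the proof of the previous corollary (apply $\cycle{-1}$ first, then $\flip$). Restricting that corollary to its first two coordinates, $g$ satisfies $\unexc(\sigma)=\asc(g(\sigma))$ and $\exc(\sigma)=\destilde2(g(\sigma))$ for every $\sigma\in\S n$. Reading these two equalities as a single statement about the pair, $g$ carries $(\unexc,\exc)$ to $(\asc,\destilde2)$, and reading the coordinates in the opposite order the same $g$ carries $(\exc,\unexc)$ to $(\destilde2,\asc)$. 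Thus (writing $A\sim B$ to mean that the statistics $A$ and $B$ are equidistributed) we have $(\unexc,\exc)\sim(\asc,\destilde2)$ and $(\exc,\unexc)\sim(\destilde2,\asc)$.

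Next I would check that the inverse map $\sigma\mapsto\sigma^{-1}$ interchanges $\exc$ and $\unexc$. For a position $i$ with $\sigma(i)>i$, setting $j=\sigma(i)$ gives $\sigma^{-1}(j)=i<j$, an unexceedance of $\sigma^{-1}$; this is a bijection between the exceedance positions of $\sigma$ and the unexceedance positions of $\sigma^{-1}$, while the fixed points ($\sigma(i)=i$) are visibly preserved. Hence $\exc(\sigma)=\unexc(\sigma^{-1})$ and $\unexc(\sigma)=\exc(\sigma^{-1})$, so the involution $\sigma\mapsto\sigma^{-1}$ sends the pair $(\unexc,\exc)$ to $(\exc,\unexc)$. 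In particular $(\unexc,\exc)\sim(\exc,\unexc)$.

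Chaining these equidistributions gives
\[
(\asc,\destilde2)\;\sim\;(\unexc,\exc)\;\sim\;(\exc,\unexc)\;\sim\;(\destilde2,\asc),
\]
which is exactly the claim. (If one prefers an explicit single bijection on $\S n$ realizing the swap, it is $g\circ(\cdot)^{-1}\circ g^{-1}$.) I do not expect any genuine obstacle here: each link is either a bijection already constructed or the elementary inversion involution, and the only point requiring care is the fixed-point bookkeeping when verifying that inversion interchanges $\exc$ and $\unexc$. The one idea that keeps the argument short is to route the symmetry through $(\unexc,\exc)$, where the swap is visibly realized by $\sigma\mapsto\sigma^{-1}$, rather than hunting for a direct involution on the $\destilde2$/$\asc$ side.
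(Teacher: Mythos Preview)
Your proof is correct and follows essentially the same route as the paper: both reduce to the symmetry of $(\unexc,\exc)$ via the bijection from the preceding corollary. The only difference is in the middle step, where the paper uses the involution $\flip$ (which sends an exceedance at position $j$ to an unexceedance at position $n+1-j$) rather than $\sigma\mapsto\sigma^{-1}$ to swap $\exc$ and $\unexc$; both choices are elementary and work equally well.
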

\begin{proof}
$(\asc, \destilde2)$ has the same distribution as $(\unexc, \exc)$. By the involution $\flip$, we see that $(\unexc, \exc)$ has the same distribution as $(\exc, \unexc)$. The result follows.
\end{proof}

\section{Mahonian statistics}\label{mahoniansection}

\subsection{Codes}{codes-sec}

In order to study Mahonian statistics such as $\maj$, we will use the notion of a \emph{code}. A code $c$ of length $n$ is a sequence of integers $c(1),\dotsc,c(n)$ such that $0 \le c(i) < i$ for all $i$. We let $\C  n$ denote the set of codes of length $n$; it is clear that there are $n!$ such codes.

Codes, as examined by Skandera \cite{Skandera}, provide a natural way to think about the Mahonian distribution. Define the statistic $\sum$ by
\begin{equation}
\sum(c) = c(1) + \dotsb + c(n)
\end{equation}
for $c \in \C  n$. Then the statistic $\sum$ is Mahonian. In \cite{Skandera}, it is shown that sum over $\C n$ has the same distribution as $\maj$ and $\inv$ over $\S n$.

Before looking at $\maj$ in the next section, let us consider the simpler case of $\inv$, as shown in \cite{Skandera}, to understand how we can use codes to study Mahonian statistics. Consider any permutation $\simga \in \S n$. We construct a code $c \in \C n$ as follows. Simply let
\begin{equation}
c(i) = \#\{j : j > \sigma^{-1}(i), \sigma(j) < i\}.
\end{equation}
Then $\inv(\sigma) = \sum(c)$. We all this mapping an $\inv$-coding scheme, and we denote it by $\invcode$. It is not difficult to see that this map is a bijection; indeed, we can reverse it in this way: to construct $\sigma$ from $c$, start with the permutation$1$. Then, for $i \ge 2$, insert $i$ into the permutation at the $c(i)^\text{th}$ spot from the end (the $0^\text{th}$ position being the end, the $(i-1)^\text{th}$ position being the beginning). For instance, if we have $4~1~2~3$ and $c(5) = 3$, we would insert $5$ to get $4~5~1~2~3$.

Skandera also considers another statistic on codes, $\st$. He defines $\st$ to be the length of the longest sequence $p_1 < p_2 < \dotsb < p_\ell$ such that $c(p_i) > i$ for all $i$. For our purposes it will be more convenient to use the following inductive definition, which is easily seen to be equivalent. Let $\st(0) = 0$, and for $n \ge 2$ and $c \in \C n$, let
\begin{equation}\label{st-def}
\st(c) = \left\{\begin{tabular}{ll}$\st(c_{n-1})$ & if $c(n) \le \st(c_{n-1})$\\ $\st(c_{n-1}) + 1$ & if $c(n) > \st(c_{n-1})$\end{tabular}\right.
\end{equation}
where $c_{n-1} \in \C{n-1}$ is the prefix $c(1),\dotsc,c(n-1)$ of $c$.

Skandera then shows that the distribution of $\st$ over $\C n$ is Eulerian. He does this by constructing a \emph{$\maj k$-coding scheme}, that is, a bijection $\majcodeone : \S n \to \C n$ for which $\maj(\sigma) = \sum(\majcodeone(\sigma))$. The bijection also has the property that $\des(\sigma) = \st(\majcodeone(\sigma))$, and thus has the same distribution over $\C n$ as $\des$ does over $\S n$. We will provide such a bijection in this paper; however, it will be a slightly modified version of Skandera's bijection.

Rawlings \cite{Rawlings} generalizes the bijection. He provides, for any constant $k$, what we will call a \emph{$\majtilde k$-coding scheme}, a bijection $\majcode k : \S n \to \C n$, such that
\begin{equation}
\majtilde k(\sigma) = \sum(\majcode k(\sigma).
\end{equation}
This shows that $\maj k$ is Mahonian for any $k$.

\subsection{Our results} \label{our-results-sec}

We take these ideas a step further, by combining Rawlings' bijections with analysis of the $\st$ statistic. First, we generalize the $\st$ statistic to a more general $\st_k$ (such that $\st_1$ is identical to $\st$). Again, for any $k$, we define $\st_k$ inductively. Let $\st_k(0) = 0$, and for $n\ge 2$ and $c \in \C n$, we let
\begin{equation}\label{stk-def}
\st_k(c) = \left\{\begin{tabular}{ll} $\st_k(c)$ & if $c(n) \le \st k (c_{n-1}) + k - 1$\\ $\st_k(c_{n-1}) + 1$ & if $c(n) > \st_k(c_{n-1} + k - 1$\end{tabular}\right.
\end{equation}
where, again $c_{n-1} \in \C {n-1}$ is the prefix $c(1),\dotsc,c(n-1)$ of $c$.

We will present a specific set of bijections $\majcode k : \S n \to \C n$ using essentially the same technique as in \cite{Rawlings}. In particular, we will show, by construction,
\begin{theorem}\label{bij-thm}
There exists a set of bijections $\majcode k : \S n \to \C n$, such that, for all $k$ and every $\sigma \in \S n$
\begin{enumerate}[(i)]
\item $\majtilde k(\sigma) = \sum(\majcode k(\sigma))$
\item $\des_k(\sigma) = \st_k(\majcode k(\sigma))$
\item $\destilde{k+1}(\sigma) = \st_k(\majcode{k+1}(\sigma))$
\end{enumerate}
\end{theorem}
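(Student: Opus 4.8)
The plan is to construct the bijections $\majcode k$ explicitly by a single insertion recursion on $n$ — the same device used above to reverse $\invcode$ — and to prove (i), (ii), and (iii) simultaneously by induction on $n$. Concretely, given a code $c \in \C n$, I would build $\sigma$ by inserting the values $1, 2, \ldots, n$ in increasing order, where at step $i$ the entry $c(i)$ selects the gap into which the new largest value $i$ is placed. The whole content of the construction is the choice of which gap $c(i)$ names: following \cite{Rawlings}, the gaps are enumerated not left-to-right but in an order dictated by the current $k$-descent structure, so that inserting $i$ raises $\majhat k$ by exactly $c(i)$. Since the empty permutation has $\majhat k = 0$ and $c(i)$ ranges over $\{0, \ldots, i-1\}$, this is visibly a bijection $\S n \to \C n$, and telescoping the increments yields property (i).

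The technical heart is verifying that the insertion rule really does raise $\majhat k$ by $c(i)$. Recalling (\ref{majkdef}), $\majhat k = \maj_k + \#\{(i,j): i<j,\ \sigma(i) < \sigma(j) < \sigma(i)+k\}$, so two quantities move at once when the new maximum $i$ is inserted: the positions of existing $k$-descents to the right of the insertion point shift by one, changing $\maj_k$, and the pair-count gains one for each of the values $i-k+1, \ldots, i-1$ already sitting to the left of the insertion point. The argument is to show that, under Rawlings' gap ordering, these two contributions combine into a single quantity that sweeps cleanly over $\{0, \ldots, i-1\}$ as the gap varies. This bookkeeping — reconciling the position shifts of $\maj_k$ with the close-ascent count — is where I expect the main difficulty to lie, and it is essentially the step for which I would lean most heavily on \cite{Rawlings}.

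For (ii) I would track $\des_k$ through the same recursion and compare it with the inductive definition (\ref{stk-def}) of $\st_k$. By the induction hypothesis $\st_k(c_{i-1}) = \des_k(\sigma_{i-1})$, so it suffices to check that inserting $i$ creates a new $k$-descent precisely when $c(i) > \st_k(c_{i-1}) + k - 1$ — that is, that the threshold built into (\ref{stk-def}) matches the threshold at which the insertion first introduces a fresh $k$-descent. Because the gap ordering in the construction is keyed to exactly this descent structure, the two thresholds should coincide on the nose, and (ii) follows.

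Finally, (iii) feeds the code produced by the $(k+1)$-scheme into $\st_k$, whose threshold $+(k-1)$ is one below the threshold $+k$ governing $\st_{k+1}$. Since property (ii) applied at index $k+1$ already gives $\des_{k+1}(\sigma) = \st_{k+1}(\majcode{k+1}(\sigma))$, it suffices to compare $\st_k$ and $\st_{k+1}$ on the same code: the lower threshold forces $\st_k - \st_{k+1} \in \{0,1\}$, and I would show this difference equals $1$ exactly when the first-placed structure triggers an early increment — which translates into the condition $\sigma(1) \le n-k$ and hence reproduces precisely the $+1$ boundary correction distinguishing $\destilde{k+1}$ from $\des_{k+1}$. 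Localizing this off-by-one to the single place where it matters is the conceptually subtlest point, although it is combinatorially lighter than the $\majhat k$ computation in (i).
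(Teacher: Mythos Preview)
Your plan has the right architecture --- define the bijection by a one-letter insertion recursion and then verify (i)--(iii) by induction --- but the specific recursion you propose is the wrong one, and it does not produce a valid $\majcode k$ for $k \ge 2$. You insert the new \emph{maximum} $i$ at step $i$; the paper instead removes (and re-inserts) the \emph{minimum}, via the operations $\cut$ (delete $1$ and decrement) and $\ins_i$ (increment all entries and insert $1$ at position $i$). For $k=1$ these two devices are interchangeable, which is presumably why your intuition from $\invcode$ and the classical $\maj$-code carried over. For $k \ge 2$ they are not: with insert-max the increments $\majhat k(\sigma_i) - \majhat k(\sigma_{i-1})$ fail to sweep over $\{0,\dots,i-1\}$. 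Concretely, take $k=2$ and $\sigma_2 = 2\,1$, so $\majhat 2(\sigma_2)=0$; inserting $3$ at positions $1,2,3$ yields $3\,2\,1$, $2\,3\,1$, $2\,1\,3$ with $\majhat 2$ equal to $0,3,1$ respectively, so the differences are $\{0,3,1\}$ rather than $\{0,1,2\}$. The culprit is exactly the piece you flagged as ``the main difficulty'': when the new maximum lands, it can \emph{create} a $k$-descent at its own position, contributing its full position index to $\maj_k$, and this term does not combine with the close-pair count in the clean way you describe. Under the paper's insert-min scheme the bookkeeping does close up: the relevant set $A_k(\sigma)$ in (\ref{A-def}) packages the positions where $\des_k$ stays fixed, and Lemma~\ref{sub-lemma} then shows the increment is exactly $|A_k(\sigma)\cap\{i+1,\dots,n\}|$ plus $0$ or $i-1$, which Lemma~\ref{j-lemma} converts into the clean sweep over $\{0,\dots,n-1\}$.

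Your strategy for (iii) --- deduce it from (ii) at level $k{+}1$ by comparing $\st_k$ and $\st_{k+1}$ on the same code --- is an appealing reduction, and the observation that $\st_k(c)-\st_{k+1}(c)\in\{0,1\}$ is correct and easy. But the remaining step, tying that off-by-one to the condition $\sigma(1)\le n-k$, is not as light as you suggest; under your insert-max scheme $\sigma(1)$ is not controlled by any single insertion, so there is no obvious ``first-placed structure'' to localize. The paper avoids this by tracking $\destilde{k+1}$ directly through the insert-min recursion: it introduces the modified set $\widetilde A_{k+1}(\sigma)$ in (\ref{Atilde-def}), which drops the element $1$ from $A_{k+1}(\sigma)$ exactly when $\sigma(1)\le n-k$, and then checks that $\destilde{k+1}$ obeys the same increment rule as $\st_k$. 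The reason insert-min is natural here is that inserting $1$ at position $1$ is precisely what governs whether the boundary correction in $\destilde{k+1}$ changes, so the first-position condition is visible at each step of the recursion.
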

This immediately gives the following result:
\begin{corollary}\label{pair-corollary}
For any $k$, the bivariate statistics $(\des_k, \majtilde k)$ and $(\destilde{k+1}, \majtilde{k+1})$ have the same distribution.
\end{corollary}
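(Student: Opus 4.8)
The plan is to deduce the corollary directly from Theorem \ref{bij-thm} by composing the two coding bijections it supplies. The key observation is that both $\majcode k$ and $\majcode{k+1}$ land in the same space $\C n$ of codes, and that parts (ii) and (iii) of the theorem arrange for these two bijections to carry two \emph{different} pairs of permutation statistics onto the \emph{same} pair $(\st_k, \sum)$ of code statistics.

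First I would fix $k$ and $n$ and set $\Phi = (\majcode{k+1})^{-1} \circ \majcode k$; this is a bijection from $\S n$ to itself, since each factor is a bijection between $\S n$ and $\C n$.

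Next I would trace a permutation $\sigma \in \S n$ through $\Phi$. Writing $c = \majcode k(\sigma) \in \C n$, parts (i) and (ii) give $\majtilde k(\sigma) = \sum(c)$ and $\des_k(\sigma) = \st_k(c)$. Setting $\tau = \Phi(\sigma)$, so that $\majcode{k+1}(\tau) = c$, part (i) applied to the $(k+1)$-scheme gives $\majtilde{k+1}(\tau) = \sum(c)$, while part (iii) gives $\destilde{k+1}(\tau) = \st_k(c)$. Comparing these identities, $\Phi$ satisfies $\des_k(\sigma) = \destilde{k+1}(\tau)$ and $\majtilde k(\sigma) = \majtilde{k+1}(\tau)$; that is, $\Phi$ is a bijection on $\S n$ sending the pair $(\des_k, \majtilde k)$ to $(\destilde{k+1}, \majtilde{k+1})$. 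Equidistribution of the two bivariate statistics is then immediate.

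The substance of the argument lives entirely in Theorem \ref{bij-thm}; once that is established, the corollary is essentially a one-line composition. The one point demanding care is matching the indices correctly: it is property (iii) that lets the $(k+1)$-scheme $\majcode{k+1}$ read off $\destilde{k+1}$ through the \emph{same} code statistic $\st_k$ that property (ii) uses for $\majcode k$ to read off $\des_k$. The real obstacle is thus not the corollary itself but part (iii) of the theorem — arranging that a single family of bijections simultaneously realizes $\des_k$ as $\st_k \circ \majcode k$ and $\destilde{k+1}$ as $\st_k \circ \majcode{k+1}$.
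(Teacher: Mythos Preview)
Your argument is correct and is essentially the paper's own proof: both $(\des_k,\majtilde k)$ and $(\destilde{k+1},\majtilde{k+1})$ are carried by the respective coding bijections to the same pair $(\st_k,\sum)$ on $\C n$, and you simply make the implicit composite bijection $(\majcode{k+1})^{-1}\circ\majcode k$ explicit. The only added content is writing out $\Phi$ and tracing a permutation through it, which is a harmless elaboration of the paper's two-line deduction.
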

\begin{proof}
By considering the bijection $\majcode k$, we see that $(\des k, \majtilde k)$ has the same distribution as $(\st k, \sum)$. By considering the bijection $\majcode{k+1}$, we see that $(\destilde{k+1}, \majtilde{k+1})$ has the same distribution as $(\st_k, sum)$. Thus, $(\des_k, \majtilde k)$ and $(\destilde {k+1}, \majtilde{k+1})$ have the same distribution.
\end{proof}
For the case $k=1$, Corollary \ref{pair-corollary} gives equation (\ref{desmajdes2maj2}) and thus gives another proof that $\destilde 2$ is Eulerian.

\subsection{The $\majtilde k$-coding scheme}

We will first describe what the mapping $\majcode k$ is, and then prove things about it. The map itself is very easy to state, although it is not initially obvious that it is a bijection, or even that its output will always like in $\C n$.

Define the operation $\cut : \S n \to \S{n-1}$ as follows. For any $\sigma \in \S n$, $\cut(\sigma)$ is the permutation obtained by removing $1$ from $\sigma$ and decrementing all other numbers. For instance, if $\sigma = 4~1~5~2~3$, we remove $1$ to get $4~5~2~3$ and $\cut(\sigma) = 3~4~1~2$.

\begin{mapping}[$\majcode{k}$]\label{mapping}
Take any permutation $\simga \in \S n$. Let $\sigma_n = \sigma$ and $\simga_i = \cut(\sigma_{i+1})$ for $1\le i\le n-1$. Thus we have a sequence $\sigma_n,\dotsc,\sigma(1)$ with $\sigma_i \in \S i$ for all $i$. Now let $c(1) = 0$ and $c(i) = \majtilde k(\sigma_i) - \majtilde k(\sigma_{i-1})$ for $i \ge 2$. Then we let $\majcode k$ be the map which takes $\sigma$ to $c$.
\end{mapping}
Now it is immediate that $\majtilde k(\sigma) = \sum(\majcode k(\sigma))$. Thus, if we can prove that this mapping is a bijection $\S n$ to $\C n$, then we will already satisfy (i) of Theorem \ref{bij-thm}.

To understand this map, we examine what happens when we go from $\cut(\sigma)$ to $\sigma$. Let us define an inverse of the cut operation as follows. For any permutation $\sigma$, let $\ins_i(\sigma)$ be the permutation obtained by incrementing each number of $\sigma$ by $1$ and then inserting $1$ at position $i$. Thus, $\ins_i(\sigma)(i) = 1$ and $\cut(\ins_i(\sigma)) = \sigma$ for any $i$.

We want to study $\majtilde k(\ins_i(\sigma)) - \majtilde k(\sigma)$, for $\sigma \in \S{n-1}$. We claim that as $i$ ranges over $\{1,\dotsc,n\}$, this quantity ranges over $\{0,\dotsc,n-1\}$. Once we prove this, it will be clear that $\majtilde k$ is a surjection $\C n$. Indeed, consider any code $c \in \C n$. Starting with $\sigma_1 = 1$, we can find a sequence $\sigma_1,\dotsc,\sigma_n$ such that $\cut(\sigma_i) = \sigma_{i-1}$ and $\majtilde k(\sigma_i) - \majtilde k(\sigma_{i-1}) = c(i)$. Then $\majcode k(\sigma_n) = c$.

Our analysis of the value of $\majtilde k(\ins_i(\sigma)) - \majtilde k(\sigma)$ relies on a set which we will call $A_k(\sigma)$.
\begin{equation}\label{A-def}
A_k(\sigma) \stackrel{\text{def}}{=} \{1\} \cup \{i > 1 : \sigma(i-1) \ge \sigma(i) + k\text{ or }\sigma(i-1) < k\}
\end{equation}
Why is this set important? It is exacctly the set of positions such that $\des_k$ does not increase when we $\ins$ at that position. More precisely,
\begin{itemize}
\item If $i \in A_k(\sigma)$, then $\des_k(\ins_i(\sigma)) = \des_k(\sigma)$.
\item If $i \not\in A_k(\sigma)$, then $\des_k(\ins_i(\sigma)) = \des_k(\sigma) + 1$.
\end{itemize}
To see why, note that when we insert into $\sigma$ at position $i$, we may gain one $k$-descent if $\sigma(i-1) \ge k$. However, if this is the case, we may also lose one descent (for a net gain of $0$) if $\sigma(i-1) \ge \sigma(i) + k$ was already a $k$-descent.

\begin{example}
Consider the permutation $\sigma = 5~2~1~3~4$ and $k=2$. First increment each number:
\begin{center}\begin{tabular}{ccccccccc}
6 &\phantom{6}& 3 &\phantom{6}& 2 &\phantom{6}& 4 &\phantom{6}& 5
\end{tabular}\end{center}
\emph{Case 1.} $\sigma(i-1) < k$ and $i \in A_k(\sigma)$. For example, take $i = 4$.
\begin{center}\begin{tabular}{ccccccccc}
6 &\phantom{6}& 3 &\phantom{6}& 2 &1& 4 &\phantom{6}& 5
\end{tabular}\end{center}
Here, $\des_k$ does not increase because $2~1$ is not a $2$-descent. Thus, $4 \in A_k(\sigma)$ since $\sigma(3) = 1 < 2$.

\emph{Case 2.} $\sigma(i-1) \ge \sigma(i) + k$ and $i \in A_k(\sigma)$. For example, take $i=2$.
\begin{center}\begin{tabular}{ccccccccc}
6 &1& 3 &\phantom{6}& 2 &\phantom{6}& 4 &\phantom{6}& 5
\end{tabular}\end{center}
Here, $\des_k$ does not increase because, although $6~1$ is indeed a $2$-descent, there was already a $2$-descent, $6~3$ that gets broken by the inserted $1$.

\emph{Case 3.} $i \not\in A_k$. For example, take $i=3$.
\begin{center}\begin{tabular}{ccccccccc}
6 &\phantom{6}& 3 &1& 2 &\phantom{6}& 4 &\phantom{6}& 5
\end{tabular}\end{center}
Here, $\des_k$ does increase because $3~1$ is a $2$-descent. We did not lose any $2$-descents, since $3~2$ was not a $2$-descent.
\end{example}

Now we will see how this set $A_k(\sigma)$ relates to $\majtilde k(\ins_i(\sigma)) - \majtilde k(\sigma)$.

\begin{lemma}\label{sub-lemma}
For $\sigma \in \S{n-1}$, we have
\begin{equation*}
\majtilde k(\ins_i(\sigma)) - \majtilde k(\sigma) = |A_k(\sigma) \cup \{i+1,\dotsc,n\}| + \left\{\begin{tabular}{ll}$0$ & if $i \in A_k(\sigma)$\\ $i-1$ & if $i \not\in A_k(\sigma)$\end{tabular}\right.
\end{equation*}
\end{lemma}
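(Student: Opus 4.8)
The plan is to decompose $\majtilde k=\maj_k+B_k$, where $B_k(\sigma)\eqdef\#\{(p,q):p<q,\ \sigma(p)<\sigma(q)<\sigma(p)+k\}$ is the pair-count term from (\ref{majkdef}), and to track each summand separately as $\sigma$ is replaced by $\ins_i(\sigma)$. The first useful step is to rewrite $B_k$ in terms of values rather than positions: putting $a=\sigma(p)$, $b=\sigma(q)$ gives
\[
B_k(\sigma)=\#\{(a,b):\ 1\le b-a\le k-1,\ \sigma^{-1}(a)<\sigma^{-1}(b)\},
\]
so $B_k$ counts pairs of values differing by at most $k-1$ that appear in increasing positional order. (For $k=2$ this is exactly $\asc(\sigma^{-1})$, recovering the definition (\ref{maj2def}).)

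Next I would compute the change in $\maj_k$. Writing $\tau=\ins_i(\sigma)$, every value is incremented by $1$ and a new value $1$ is placed at position $i$, so each position $p$ of $\sigma$ moves to $p$ if $p<i$ and to $p+1$ if $p\ge i$, with all value differences preserved. Tracking $\DES_k$ through this shift: descents at positions $p\le i-2$ stay put; the pair $(\sigma(i-1),\sigma(i))$ is broken, removing a descent at $i-1$ and contributing $-d\,(i-1)$ where $d=[\sigma(i-1)\ge\sigma(i)+k]$; the inserted $1$ creates a descent at position $i-1$ exactly when $\sigma(i-1)\ge k$, contributing $+e\,(i-1)$ where $e=[\sigma(i-1)\ge k]$; the pair $(1,\sigma(i)+1)$ is never a $k$-descent; and each descent of $\sigma$ at a position $p\ge i$ reappears at $p+1$, contributing an extra $+1$ apiece. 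Summing,
\[
\maj_k(\ins_i(\sigma))-\maj_k(\sigma)=(e-d)(i-1)+N_{\ge i},\qquad N_{\ge i}\eqdef\#\{p\ge i:\ \sigma(p)\ge\sigma(p+1)+k\}.
\]
For $B_k$ I would argue that only pairs involving the new value $1$ matter: among values $\ge2$ the shift preserves relative positional order, reproducing $B_k(\sigma)$ exactly, while a pair $(1,b)$ with $b\le k$ is counted iff $1$ precedes $b$ in $\tau$, which the position computation shows happens precisely when the value $b-1$ sat at position $\ge i$ in $\sigma$. Hence
\[
B_k(\ins_i(\sigma))-B_k(\sigma)=M_{\ge i},\qquad M_{\ge i}\eqdef\#\{p\ge i:\ \sigma(p)<k\}.
\]

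Finally I would assemble the pieces. The key combinatorial identity is $N_{\ge i}+M_{\ge i}=|A_k(\sigma)\cap\{i+1,\dots,n\}|$: writing $p=j-1$ for $j\in\{i+1,\dots,n\}$, the two defining clauses of $A_k$ — that $p$ is a $k$-descent, or that $\sigma(p)<k$ — are mutually exclusive (a $k$-descent forces $\sigma(p)\ge k+1$), so the cardinality splits exactly as $N_{\ge i}+M_{\ge i}$, with the boundary index $j=n$ covered by the convention that makes $n\in A_k$ iff $\sigma(n-1)<k$. It then remains to match $(e-d)(i-1)$ with the stated correction: since $d=1$ forces $e=1$, we have $e-d\in\{0,1\}$, and $e-d=1$ holds exactly when $\sigma(i-1)\ge k$ while $i-1$ is not a $k$-descent — precisely the condition $i\notin A_k(\sigma)$ — and $e-d=0$ otherwise (with $i=1\in A_k$ giving $0$ trivially). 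Combining yields the claimed formula; I note that for this to hold the set operation in the displayed statement must be the intersection $A_k(\sigma)\cap\{i+1,\dots,n\}$ rather than a union.

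The main obstacle will be the weighted bookkeeping in the $\maj_k$ step. Because $\maj_k$ weights each descent by its position, one cannot simply count descents (as in the $\des_k$ discussion preceding the lemma) but must simultaneously account for the destruction of the old descent at $i-1$, the possible creation of a new descent there, and the uniform $+1$ position shift of all later descents; getting these three contributions and their coefficients right is the delicate part, whereas the $B_k$ change and the final matching with $A_k$ are comparatively routine once the value-based reformulation is in hand.
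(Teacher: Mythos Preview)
Your proof is correct and follows essentially the same approach as the paper: both decompose $\majtilde k$ into $\maj_k$ plus the pair-count $S_k$ (your $B_k$), compute each change separately to get exactly your $N_{\ge i}$ and $M_{\ge i}$, and add. Your write-up is more detailed in justifying why $N_{\ge i}+M_{\ge i}=|A_k(\sigma)\cap\{i+1,\dots,n\}|$ and why the $(e-d)(i-1)$ term matches the $A_k$-membership correction, and you are right that the displayed statement should read $\cap$ rather than $\cup$.
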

\begin{proof}
Recall from definition (\ref{majkdef}) that
\begin{equation}
\majtilde k(\sigma) = \maj_k(\sigma) + \#\{(i,j) : i < j\text{ and }\sigma(i) < \sigma(j) < \sigma(i) + k\}.
\end{equation}
Let
\begin{equation}
S_k(\sigma) \stackrel{\text{def}}{=} \{(i,j) : i < j\text{ and }\sigma(i) < \sigma(j) < \sigma(i) + k\}
\end{equation}
be the second component of that sum. We consider $\maj_k(\ins_i(\sigma)) - \maj_k(\sigma)$ and $|S_k(\ins_i(\sigma))| - |S_k(\sigma)|$ separately.

We can see that
\begin{equation}\label{eqn-part1}
|S_k(\ins_i(\sigma))| - |S_k(\sigma)| = \#\{j\ge i : \sigma(j) < k\}
\end{equation}
since each such $j$ in the set means we have a pair $(i,j+1)$ for which $i < j=1$ and $\ins_i(\sigma)(i) = 1 < \ins_i(\sigma)(j) < k+1$.

We also have that
\begin{equation}\label{eqn-part2}
\maj(\ins_i(\sigma)) - \maj(\sigma) = \#\{j \ge i : \sigma(j) \ge \simga(j+1) + k\} + \left\{\begin{tabular}{ll}$0$ & if $i \in A_k(\sigma)$\\ $i-1$ & if $i \not\in A_k(\sigma)$\end{tabular}\right.
\end{equation}
because $\#\{j \ge i : \sigma(j) \ge \sigma(j+1) + k\}$ is the number of $k$-descents that get pushed one position to the right, and a $k$-descent is added at position $i-1$ if $i \not \in A_k(\sigma)$. Adding (\ref{eqn-part1}) and (\ref{eqn-part2}) gives the result.
\end{proof}

Given a set $A_k(\sigma)$ for a permutation $\sigma \in \S{n-1}$, we can construct a sequence $\mathcal{A}_k(\sigma) = a_0,\dotsc,a_{n-1}$
\begin{itemize}
\item $a_0 > \dotsb > a_{|A_k(\sigma)|} - 1$ are the elements of $A_k(\sigma)$
\item $a_{|A_k(\sigma)} < \dotsb < a_{n-1}$ are the elements of $\{1,\dotsc,n\} \backslash A_k(\sigma)$, denotes $\overline{A}_k(\sigma)$.
\end{itemize}
\begin{lemma} \label{j-lemma} For $\sigma\in\S{n-1}$ and $0 \le j \le n-1$, we have
\begin{equation*} \majtilde k(\ins_{a_j}(\sigma)) - \majtilde k(\sigma) = j \end{equation*}
\end{lemma}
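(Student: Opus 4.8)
The plan is to evaluate the formula of Lemma~\ref{sub-lemma} at $i = a_j$ and read off the value directly from the way the sequence $\mathcal{A}_k(\sigma)$ is ordered. Recall that Lemma~\ref{sub-lemma} expresses $\majtilde k(\ins_i(\sigma)) - \majtilde k(\sigma)$ as the number of elements of $A_k(\sigma)$ that exceed $i$ (that is, $|A_k(\sigma)\cap\{i+1,\dotsc,n\}|$), plus a correction term equal to $0$ when $i \in A_k(\sigma)$ and $i-1$ when $i \notin A_k(\sigma)$. Writing $m = |A_k(\sigma)|$, the two bullet points defining $\mathcal{A}_k(\sigma)$ say exactly that $a_0,\dotsc,a_{m-1}$ enumerate $A_k(\sigma)$ in strictly decreasing order while $a_m,\dotsc,a_{n-1}$ enumerate $\overline{A}_k(\sigma)$ in strictly increasing order. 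So I would split into the two cases $0 \le j \le m-1$ (where $a_j \in A_k(\sigma)$) and $m \le j \le n-1$ (where $a_j \in \overline{A}_k(\sigma)$), and in each case simply count.

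In the first case the correction term vanishes, so I only need to check that exactly $j$ elements of $A_k(\sigma)$ exceed $a_j$. Since $a_0 > a_1 > \dotsb > a_{m-1}$ is a decreasing enumeration of $A_k(\sigma)$, the elements of $A_k(\sigma)$ larger than $a_j$ are precisely $a_0,\dotsc,a_{j-1}$, of which there are $j$. Hence $\majtilde k(\ins_{a_j}(\sigma)) - \majtilde k(\sigma) = j + 0 = j$, as wanted.

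The second case is where the real bookkeeping happens, and I expect it to be the main obstacle. Here the correction term is $a_j - 1$, so I must verify that
\[
(a_j - 1) + \#\{x \in A_k(\sigma) : x > a_j\} = j.
\]
I would prove this by partitioning the ground set $\{1,\dotsc,a_j-1\}$ according to membership in $A_k(\sigma)$. On one hand, since $a_j \notin A_k(\sigma)$, the $m$ elements of $A_k(\sigma)$ all lie either strictly below or strictly above $a_j$, so $\#\{x \in A_k(\sigma) : x < a_j\} = m - \#\{x \in A_k(\sigma) : x > a_j\}$. On the other hand, because $a_m < a_{m+1} < \dotsb < a_{n-1}$ enumerates $\overline{A}_k(\sigma)$ in increasing order and $a_j$ is its $(j-m)$-th entry, exactly $j - m$ elements of $\overline{A}_k(\sigma)$ lie below $a_j$. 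Adding the two counts gives $a_j - 1 = \bigl(m - \#\{x \in A_k(\sigma) : x > a_j\}\bigr) + (j-m) = j - \#\{x \in A_k(\sigma) : x > a_j\}$, which rearranges to the displayed identity; so the formula again evaluates to $j$.

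Finally, I would note that, taken together, the two cases show that as $j$ runs over $\{0,\dotsc,n-1\}$ the increment $\majtilde k(\ins_{a_j}(\sigma)) - \majtilde k(\sigma)$ runs over $\{0,\dotsc,n-1\}$, each value attained exactly once (since $a_0,\dotsc,a_{n-1}$ is a permutation of all insertion positions $\{1,\dotsc,n\}$). This is precisely the claim asserted just before Lemma~\ref{sub-lemma} that these increments range over all of $\{0,\dotsc,n-1\}$, which is what guarantees that Mapping~\ref{mapping} always lands in $\C n$ and is a bijection.
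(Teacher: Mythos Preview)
Your argument is correct and follows essentially the same route as the paper's proof: both invoke Lemma~\ref{sub-lemma}, split into the cases $a_j \in A_k(\sigma)$ and $a_j \in \overline{A}_k(\sigma)$, and in each case read off the required count directly from the decreasing/increasing ordering of the sequence $\mathcal{A}_k(\sigma)$. Your bookkeeping in the second case (partitioning $\{1,\dotsc,a_j-1\}$ into its $A_k$- and $\overline{A}_k$-parts) is just a mild rephrasing of the paper's chain of equalities, and your closing remark about the increments hitting each value in $\{0,\dotsc,n-1\}$ exactly once matches the paper's conclusion following the lemma.
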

\begin{proof} By Lemma (\ref{sub-lemma}) it suffices to show that $i = a_j$, then
\begin{equation}
j = |A_k(\sigma) \cup \{i+1,\dotsc,n\}| + \left\{\begin{tabular}{ll}$0$ & if $i \in A_k(\sigma)$\\$i-1$ & if $i \not\in A_k(\sigma)$\end{tabular}\right.
\end{equation}
To see why this is true, we first consider the case that $i \in A_k(\sigma)$. In this case, $i = a_j$ is the $(j+1)^\text{th}$ largest element of $A_k(\sigma)$, so we get
\begin{equation}
j = |A_k(\sigma) \cup \{i+1,\dotsc,n\}|.
\end{equation}
In the other case, suppose $i \not\in A_k(\sigma)$. Since $a_{|A_k(\sigma)|}, \dotsc, a_{n-1}$ are the elements of $\overline{A}_k(\sigma)$, in ascending order, we have $j \ge |A_k(\sigma)|$ and $i = a_j$ is the $(j - |A_k(\sigma) + 1)^\text{th}$ smallest element of $\overline{A}_k(\sigma)$. Hence,
\begin{align}
j - |A_k(\sigma) &= |\overline{A}_k(\sigma) \cup \{1,\dotsc,i-1\}|\\
j - |A_k(\sigma)| &= i - 1 - |A_k(\sigma) \cap \{1,\dotsc,i-1\}|\\
j &= i - 1 + |A_k(\sigma)| - |A_k(\sigma) \cap \{1,\dotsc,i-1\}|\\
j &= i-1+|A_k(\sigma)\cap\{i+1,\dotsc,n\}|
\end{align}
\end{proof}
Lemma \ref{j-lemma} shows that, as we claimed earlier, that $\majtilde k(\ins_i(\sigma)) - \majtilde k(\sigma)$ ranges over $\{0,\dotsc,n-1\}$ as $i$ ranges over $\{1,\dotsc,n\}$. Thus, we have shown that $\majcode k$ satisfies clause (i) of Theorem \ref{bij-thm}.

\subsection{Analysis of $\des_k$ and $\destilde{k+1}$}

We now turn our attention towards proving that $\majcode{k}$ satisfies (ii) and (iii) of Theorem \ref{bij-thm}.

\begin{proof}[Theorem \ref{bij-thm} (ii)] We want to show that $\des_k(\sigma) = \st_k(\majcode{}(\sigma))$, for $\sigma \in \S n$. We will prove this by inducion on $n$. The claim is obvious for $n \le k$, sincce in that case we will always have $\des_k(\sigma) = 0$ and $\st_k(\sigma) = 0$. Now suppose that $n > k$, and suppose that we know that it is true for $n-1$. Let $\sigma \in \S{n-1}$. We have already established that $\des(\ins_i(\sigma)) = \des(\sigma)$ if $i \in A_k(\sigma)$ and $\des(\ins_i(\sigma)) = \des(\sigma) + 1$ otherwise. But we also know from Lemma \ref{j-lemma} that $i \in A_k(\sigma)$ if and only if $\majtilde k(\ins_i(\sigma)) - \majtilde k(\sigma) < |A_k(\sigma)| = \des(\sigma) + k$. This is the same recurrence that $\st_k$ follows, and thus by induction, (ii) holds.
\end{proof}

\begin{proof}[Theorem \ref{bij-thm} (iii)] We want to show that $\destilde{k+1}(\sigma) = \st_k(\majcode{}(\sigma))$, for $\sigma \in \S n$. Once again, we will use induction, noting that $\destilde{k+1}(\sigma) = 0$ for $n \le k$. Assume $\sigma \in \S{n-1}$, with $n > k$.

We have the set $A_k$ which allows us to study $\des_k$. We will define a related set $\widetilde{A}_k$ which will allow us to study $\destilde{k+1}$. For any $\sigma \in \S{n-1}$, let
\begin{equation}\label{Atilde-def}
\widetilde{A}_{k+1}(\sigma) \stackrel{\text{def}}{=} \left\{\begin{tabular}{ll}$A_{k+1}(\sigma)$& if $\sigma(1) > n-k$\\ $A_{k+1}(\sigma)\backslash\{1\}$ & otherwise\end{tabular}\right.
\end{equation}
First, we claim that if $\destilde{k+1}(\ins_i(\sigma)) - \destilde{k+1}(\sigma)$ is $0$ is $i \in \widetilde{A}_{k+1}(\sigma)$, and $1$ otherwise. For $i > 1$, this is true for the same reason that $A_{k+1}(\sigma)$ applies to $\des_{k+1}$. Suppose $i=1$. Then we will get $\ins_i(\sigma)(1) = 1$. Since $n > k$, we will get
\begin{align}
\destilde{k+1}(\ins_i(\sigma)) &= \des_{k+1}(\ins_i(\sigma)) + 1\\
&= \des{k+1}(\sigma) + 1
\end{align}
Furthermore,
\begin{equation*}
\destilde{k+1}(\sigma) = \des{k+1}(\sigma) + 1
\end{equation*}
if $\sigma(1) \le n-k$ and
\begin{equation*}
\destilde{k+1} = \des_{k+1}(\sigma)
\end{equation*}
otherwise. Thus, if $i=1$, $\destilde{k+1}$ increases by $1$ if and only if $1 \in \widetilde{A}_k(\sigma)$.

Our next step is to show that $i \in \widetilde{A}_k(\sigma)$ if and only if
\begin{equation}
\majtilde{k+1}(\ins_i(\sigma)) - \majtilde{k+1}(\sigma) < \destilde{k+1}(\sigma) + k.
\end{equation}
This will complete the proof as it will show $\destilde{k+1}$ follows from the same recurrence as $\st_k$. Thus, we just want to show that $j < \destilde{k+1}(\sigma) + k$ if and only if $i \in \widetilde{A}_{k+1}(\sigma)$. However, it is easy to see that this is the case, as the elements of $\widetilde{A}_k(\sigma)$ are the first $|\widetilde{A}_k(\sigma)| = \destilde{k+1}(\sigma) + k$ elements of the sequence $(a_j)$. This is because teh first $|A_k(\sigma)|$ elements of the sequence are the elements of $A_k(\sigma)$. Thus the result is immediately true if $A_k(\sigma) = \widetilde{A}_k(\sigma)$. It is still true even if the $1$ is missing from $\widetilde{A}_k(\sigma)$, since $1$ is the smallest element of $A_k(\sigma)$,a nd thus appears as the last in the sequence among the elements of $A_k(\sigma)$.
\end{proof}

\section{A quasi-symmetric function proof}

In this section, we present another proof of Proposition \ref{ed} using quasi-symmetric functions suggested by Michelle Wachs. First, recall the definitons in Section 1 for $\des$, $\maj$, $\des_2$, $\maj_2$, $\widetilde{\des_2}$. We also defined $\asc$ and $\ASC$ in Section 1, now similar as for $\des$, we define $\amaj(\sigma)=\sum_{i\in ASC(\sigma)}i$, $\ASC_2(\sigma)=\{i\mid \sigma(i)<\sigma(i+1)-1\}$, $\asc_2(\sigma)=|\ASC_2(\sigma)|$,  $\amaj_2(\sigma)=\sum_{i\in ASC_2(\sigma)}i$, and
 \begin{align*}
\widetilde{\asc_2}(\sigma) &= \left\{\begin{tabular}{ll}$\asc_2(\sigma)$ & if $\sigma(1) = 1$\\ $\asc_2(\sigma) + 1$ & if $\sigma(1) \ne 1$\end{tabular}\right.
\end{align*}
Similar as $\widetilde{\des_2}$, we can see that $\widetilde{\asc_2}$ is also equal distributed as $\cover$ over all permutations of $n$ letters.
The main result of this section is the following. Notice that if we let $q=1$ in the following theorem, we get Proposition \ref{ed}.
\begin{theorem}\label{alg-thm}
For any $n \ge 1$, we have
\begin{equation*}
\sum_{\sigma \in \S n} q^{\amaj_2 (\sigma)} p^{\widetilde{\asc_2}(\sigma)} t^{\des(\sigma^{-1})} =
\sum_{\sigma \in \S n} q^{\maj(\sigma) - \exc(\sigma)} p^{\des(\sigma)} t^{\exc(\sigma)}
\end{equation*}
\end{theorem}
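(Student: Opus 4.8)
The plan is to recognize the right-hand side as a stable principal specialization of the Shareshian--Wachs Eulerian quasi-symmetric functions, and to produce the left-hand side from the same symmetric function via the symmetry between a permutation and its inverse. Throughout I write $F_{S,n}(\mathbf x)$, $S\subseteq[n-1]$, for Gessel's fundamental quasi-symmetric functions and use the evaluation
\[
F_{S,n}(1,q,q^2,\dots)=\frac{q^{\maj(S)}}{\prod_{i=1}^{n}(1-q^i)},\qquad \maj(S)=\sum_{i\in S}i .
\]
This is what manufactures the variable $q$: since $\amaj_2(\sigma)=\sum_{i\in\ASC_2(\sigma)}i=\maj(\ASC_2(\sigma))$ is literally the major index of the $2$-ascent set, and since Shareshian--Wachs exhibit a descent set $\DEX(\sigma)$ with $\maj(\DEX(\sigma))=\maj(\sigma)-\exc(\sigma)$, applying the specialization above to a suitable quasi-symmetric generating function will turn $\maj$ of a descent set into the required $q$-statistic on each side.

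Concretely, I would study the two quasi-symmetric functions
\[
L_n(\mathbf x;t)=\sum_{\sigma\in\S n} t^{\des(\sigma^{-1})}\,F_{\ASC_2(\sigma),n}(\mathbf x),
\qquad
R_n(\mathbf x;t)=\sum_{\sigma\in\S n} t^{\exc(\sigma)}\,F_{\DEX(\sigma),n}(\mathbf x).
\]
The right-hand function $R_n$ equals $\sum_j Q_{n,j}(\mathbf x)\,t^j$, which Shareshian--Wachs prove to be symmetric; its fundamental/Schur expansion is therefore governed by standard Young tableaux, and via RSK the two statistics $\des(\sigma)$ and $\des(\sigma^{-1})$ decouple onto the recording and insertion tableaux. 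The heart of the proof is to show $L_n=R_n$: because a symmetric function read against a second alphabet converts an inverse-descent statistic into a genuine descent statistic, the $t^{\des(\sigma^{-1})}$-grading on the left should match the $t^{\exc}$-grading on the right once $R_n$ is known to be symmetric. Granting $L_n=R_n$, the theorem follows by applying the specialization above, which sends $\maj(\ASC_2)\mapsto\amaj_2$ on the left and $\maj(\DEX)\mapsto\maj-\exc$ on the right, and by recording the remaining variable $p$ through the number of descents of the indexing set, so that $p$ tracks $|\ASC_2|$, corrected to $\widetilde{\asc_2}$, on the left and $\des$ on the right; comparing coefficients then clears the common denominator $\prod_i(1-q^i)$ and yields the stated polynomial identity.

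The main obstacle is precisely the identification of the $p$-gradings together with the boundary corrections. Getting $q$ and $t$ right is the clean part; but the claim that $p$ simultaneously records $\widetilde{\asc_2}$ on the left and $\des$ on the right requires (a) handling the ``$+1$ if $\sigma(1)\ne1$'' clause of $\widetilde{\asc_2}$, which should correspond to whether a distinguished boundary index is adjoined to $\ASC_2(\sigma)$, exactly as the $\destilde{2}$-versus-$\des_2$ adjustment did in Section \ref{mahoniansection}, and (b) verifying that the descent-number grading is compatible with the symmetric-function structure of $R_n$, i.e.\ that the Shareshian--Wachs symmetry can be refined to keep track of the number of descents. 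I expect (b) to be the real difficulty, since the number of descents is not obviously preserved by the Schur expansion; it may be cleanest to incorporate $p$ through a finite principal specialization $F_{S,n}(1,q,\dots,q^{m-1})$ summed against $p^{m}$, which produces a factor $p^{|S|}$ automatically, and then to read off the three-variable identity. As a consistency check, setting $q=1$ collapses the construction to the fundamental fact underlying Proposition \ref{ed}, with $p,t$ recording $\widetilde{\asc_2}\sim\cover$ against $\des(\sigma^{-1})\sim\exc$, confirming the conventions before the full argument is pushed through.
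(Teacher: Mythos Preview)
Your overall architecture is the same as the paper's: form the quasi-symmetric generating functions $L_n$ and $R_n$, invoke the Shareshian--Wachs identity to equate them (the paper simply cites this as Theorem~\ref{SW-result}, after a palindromicity step turning $\DES_2$ into $\ASC_2$; you propose to re-derive it via RSK, which is essentially how it was proved in the first place), and then apply the stable principal specialization to extract $q$ and $p$. So on the level of strategy there is no disagreement.

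The genuine gap is exactly the point you flag as ``the main obstacle'' and then leave open. The specialization hands you $p^{|\DEX(\sigma)|}$ on one side and $p^{|\ASC_2(\sigma)|}=p^{\asc_2(\sigma)}$ on the other, not $p^{\des(\sigma)}$ and $p^{\widetilde{\asc_2}(\sigma)}$. By Lemma~\ref{dex}, $|\DEX(\sigma)|$ equals $\des(\sigma)$ when $\sigma(1)=1$ and $\des(\sigma)-1$ otherwise; symmetrically, $\widetilde{\asc_2}(\sigma)$ equals $\asc_2(\sigma)$ when $\sigma(1)=1$ and $\asc_2(\sigma)+1$ otherwise. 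So what the specialized identity actually says is
\[
\sum_{\sigma(1)=1} q^{\cdots}p^{\des}t^{\exc}
+\sum_{\sigma(1)\ne 1} q^{\cdots}p^{\des-1}t^{\exc}
=\sum_{\sigma(1)=1} q^{\cdots}p^{\widetilde{\asc_2}}t^{\cdots}
+\sum_{\sigma(1)\ne 1} q^{\cdots}p^{\widetilde{\asc_2}-1}t^{\cdots},
\]
and upgrading this to the theorem amounts to showing that the two $\sigma(1)=1$ sub-sums already agree. Your suggestions --- adjoining a boundary index to $\ASC_2$, or passing to a finite principal specialization --- do not achieve this: the discrepancy is not in how $p$ is introduced but in the fact that the two ``$+1$'' corrections live on \emph{opposite} sides of the equation and do not cancel formally. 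The paper closes this gap by induction on $n$: the bijection $\S{n-1}\to\{\sigma\in\S n:\sigma(1)=1\}$ shifts each of the four statistics $(\maj-\exc,\des,\exc)$ and $(\amaj_2,\widetilde{\asc_2},\des(\sigma^{-1}))$ in exactly the way needed to reduce the $\sigma(1)=1$ identity in $\S n$ to the full identity in $\S{n-1}$. That inductive step is the missing ingredient in your proposal.
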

To prove Theorem \ref{alg-thm}, we first state a few results we need about quasi-symmetric functions. Here we will skip the detailed definitions, which can be found in the corresponding references of the following results.
\begin{theorem}[\cite{SW}, (4.8)]\label{SW-result}
For any $n\ge 1$, we have
\begin{equation*}
\sum_{\sigma \in \S n} F_{n,\DEX(\sigma)} t^{\exc(\sigma)} =
\sum_{\sigma\in\S n} F_{n,\DES_2(\sigma)} t^{\des(\sigma^{-1})},
\end{equation*}where $\DEX(\sigma)$ is some statistic related with $\DES$. We will only need the following description of $DEX$.
\end{theorem}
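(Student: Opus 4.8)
The plan is to treat Theorem \ref{SW-result} as an identity in the ring of quasisymmetric functions $\mathrm{QSym}_n$ with coefficients in $\mathbb{Z}[t]$, and to prove it through the $P$-partition (fundamental-basis) machinery. Recall that the fundamental quasisymmetric functions $\{F_{n,S} : S \subseteq [n-1]\}$ form a $\mathbb{Z}$-basis of $\mathrm{QSym}_n$. Consequently, both sides being $\mathbb{Z}[t]$-linear combinations of the $F_{n,S}$, the asserted equality is \emph{equivalent} to the refined, set-valued equidistribution
\[
\sum_{\sigma \in \S n,\ \DEX(\sigma) = S} t^{\exc(\sigma)} \;=\; \sum_{\tau \in \S n,\ \DES_2(\tau) = S} t^{\des(\tau^{-1})} \qquad \text{for every } S \subseteq [n-1].
\]
It is worth stressing at the outset that this is strictly stronger than matching only the cardinalities $|\DEX|$ and $|\DES_2|$: principal specialization collapses $F_{n,S}$ to a quantity depending only on $\sum_{i \in S} i$, so that route would prove only a coarser statement. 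The entire descent set must be tracked.

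The strategy I would follow is the one underlying Shareshian--Wachs: realize each side as a single $P$-partition generating function and match the two combinatorial models. For the left-hand (exceedance) side, I would encode a permutation $\sigma$ by reading its cycle data into a word $w(\sigma)$ --- essentially the cycle-to-word construction behind the Foata map $\cycle{0}$ used in Section \ref{excanddes} --- chosen so that $\exc(\sigma)$ becomes the number of descents of $w(\sigma)$ and $\DEX(\sigma)$ becomes the descent set of its standardization. Summing the fundamental generating functions $F_{n,\DES(\cdot)}$ of these words then reproduces $\sum_\sigma F_{n,\DEX(\sigma)} t^{\exc(\sigma)}$ by Gessel's fundamental lemma. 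For the right-hand (descent) side, I would use that $F_{n,\DES_2(\tau)}\,t^{\des(\tau^{-1})}$ is governed by the \emph{inverse} descent set, so that it is the $P$-partition generating function attached to the standardization of $\tau^{-1}$.

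The bridge between the two models is a fundamental (cycle-to-word) bijection, in the same family as $\cycle{0}$ and $\cycle{-1}$ from Section \ref{euleriansection}, that converts exceedances into descents; the task is to verify that it carries the set $\DEX(\sigma)$ to the $2$-descent set of the image and $\exc$ to $\des$ of the inverse. I expect the main obstacle to be precisely this set-valued bookkeeping: matching the full descent sets (not just their sizes), and reconciling the asymmetry between an ordinary exceedance on one side and a $2$-descent on the other, which forces a careful reindexing of positions. A cleaner high-level alternative, which I would fall back on, is to prove first that the left-hand side $\sum_j Q_{n,j}\, t^j$, where $Q_{n,j} = \sum_{\exc(\sigma)=j} F_{n,\DEX(\sigma)}$, is a \emph{symmetric} function --- the core Shareshian--Wachs theorem, provable by a Lyndon-word/banner argument --- and then to expand that symmetric function in the fundamental basis in the two different ways recorded by the two sides of the identity.
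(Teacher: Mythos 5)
First, a point of reference: the paper does not prove this statement at all --- it is imported verbatim from Shareshian--Wachs \cite{SW}, equation (4.8), and the present paper only consumes it (through Lemma \ref{dex} and specialization) inside the proof of Theorem \ref{alg-thm}. So your attempt has to be judged as a standalone proof, and as such it has a genuine gap. Your opening reduction is correct and worth keeping: since the fundamental quasisymmetric functions $F_{n,S}$ are linearly independent over $\mathbb{Z}[t]$, the identity is equivalent to the set-valued equidistribution of $(\DEX,\exc)$ with $(\DES_2,\des(\cdot^{-1}))$, and you are right that a specialization argument would only track $\sum_{i\in S} i$ rather than $S$ itself. But everything after that reduction is a plan rather than an argument. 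The central claim --- that some cycle-to-word map in the family of $\cycle{0}$ and $\cycle{-1}$ carries $\DEX(\sigma)$ to the $2$-descent set of the image while turning $\exc$ into $\des$ of the inverse --- is precisely the entire content of the theorem, and you neither define the map nor verify either statistic; you explicitly defer the ``set-valued bookkeeping,'' which is exactly where the difficulty lives. Nothing in Section \ref{euleriansection} suggests such a map exists: Proposition \ref{excdesbij} controls a single numerical statistic, not a descent \emph{set}, and the quantity $\des(\pi^{-1})$, which depends on the positions of consecutive values in the image permutation, is invisible to that analysis. Indeed, the known proof does not proceed by a direct bijection of this kind.

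Your fallback is essentially the actual Shareshian--Wachs route, but it consists of two substantial theorems, both of which you invoke rather than prove. The symmetry of $\sum_j Q_{n,j}\,t^j$, where $Q_{n,j} = \sum_{\exc(\sigma)=j} F_{n,\DEX(\sigma)}$, is the main theorem of the Eulerian quasisymmetric function paper \cite{SW2}, established via banners and Lyndon-type decompositions; and even granting symmetry, it does not by itself yield the right-hand side. One also needs a second expansion theorem identifying this symmetric function (up to $\omega$ and conventions) with the chromatic quasisymmetric function of a path, whose fundamental expansion is what produces $\sum_\tau t^{\des(\tau^{-1})} F_{n,\DES_2(\tau)}$ --- here $\des(\tau^{-1})$ plays the role of a path-graph inversion count and $\DES_2$ the role of the associated graph-descent set. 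Since neither the bijective route is carried out nor the two lemmas of the algebraic route are supplied, the proposal is correctly oriented toward the literature but does not constitute a proof; it reduces the statement properly and then stops at exactly the hard part.
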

\begin{lemma}[\cite{SW2}, Lemma 2.2]\label{dex}
$$\sum_{i\in\DEX(\sigma)}i=\maj(\sigma)-\exc(\sigma),$$
 \begin{align*}
|\DEX(\sigma)| &= \left\{\begin{tabular}{ll}$\des(\sigma)$ & if $\sigma(1) = 1$\\ $\des(\sigma) - 1$ & if $\sigma(1) \ne 1$\end{tabular}\right.
\end{align*}
\end{lemma}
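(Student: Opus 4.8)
The plan is to work directly from the definition of $\DEX$ given by Shareshian and Wachs and to reduce \emph{both} assertions of the lemma to a single position-by-position identity. Recall from \cite{SW} that $\DEX(\sigma)$ may be described as the descent set of the barred word $w = w(\sigma)$ obtained from the one-line notation of $\sigma$ by replacing $\sigma(i)$ with the barred letter $\overline{\sigma(i)}$ whenever $i$ is an excedance (that is, $\sigma(i) > i$), and leaving $\sigma(i)$ unbarred otherwise, where letters are compared in the order $\overline{1} < \cdots < \overline{n} < 1 < \cdots < n$. Write $e_i = 1$ if $\sigma(i) > i$ and $e_i = 0$ otherwise, so that $w_i$ is barred exactly when $e_i = 1$; note that $e_n = 0$ and that $e_1 = 1$ precisely when $\sigma(1) \ne 1$.

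The first and central step is to establish the master identity
$$[\,w_i > w_{i+1}\,] - [\,\sigma(i) > \sigma(i+1)\,] = e_{i+1} - e_i \qquad (1 \le i \le n-1),$$
where $[\,\cdot\,]$ denotes the indicator of the bracketed event. This is a four-case check on $(e_i, e_{i+1})$. When $e_i = e_{i+1}$ (both letters barred, or both unbarred) the barred order agrees with the natural order on the pair $w_i, w_{i+1}$, so the two indicators coincide and both sides vanish. The content is in the two mixed cases. If $e_i = 1$ and $e_{i+1} = 0$, then $w_i$ is barred and $w_{i+1}$ is not, so $w$ has an ascent there; and since $\sigma(i) \ge i+1$ while $\sigma(i+1) \le i+1$ with the values distinct, we are \emph{forced} to have $\sigma(i) > \sigma(i+1)$, giving $0 - 1 = -1 = e_{i+1} - e_i$. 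Symmetrically, if $e_i = 0$ and $e_{i+1} = 1$, then $w$ has a descent, while $\sigma(i) \le i < i+2 \le \sigma(i+1)$ forces $\sigma(i) < \sigma(i+1)$, giving $1 - 0 = 1 = e_{i+1} - e_i$.

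With the master identity in hand, both claims follow by summing over $i$. Summing with no weight telescopes immediately: $|\DEX(\sigma)| - \des(\sigma) = \sum_{i=1}^{n-1}(e_{i+1} - e_i) = e_n - e_1 = -[\,\sigma(1) \ne 1\,]$, which is exactly the stated cardinality formula. Summing after weighting by $i$ and applying summation by parts (using $e_n = 0$ in the reindexing) gives $\sum_{i \in \DEX(\sigma)} i - \maj(\sigma) = \sum_{i=1}^{n-1} i\,(e_{i+1} - e_i) = -\sum_{j=1}^{n} e_j = -\exc(\sigma)$, which is the major-index formula $\sum_{i \in \DEX(\sigma)} i = \maj(\sigma) - \exc(\sigma)$.

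I expect the main obstacle to be expository rather than computational: namely, pinning down the Shareshian--Wachs definition of $\DEX$ in a self-contained form and verifying the two forced-monotonicity facts at the mixed positions, since the whole argument rests on the observations that an excedance immediately followed by a non-excedance is automatically a descent of $\sigma$, and a non-excedance immediately followed by an excedance is automatically an ascent. Once these are checked, the telescoping and the Abel-summation step are routine, and the fact that the two seemingly different identities of the lemma are governed by the \emph{same} per-position quantity $e_{i+1} - e_i$ is what makes the proof short.
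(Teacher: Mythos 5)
Your proof is correct. Note first that there is essentially nothing in the paper to compare it against: the lemma is imported verbatim from Shareshian--Wachs (\cite{SW2}, Lemma~2.2) with no proof, and the paper deliberately leaves $\DEX$ undefined (``some statistic related with $\DES$''). So the substantive question is whether your argument stands on its own, and it does. The definition you use is the correct one from \cite{SW}/\cite{SW2}: $\DEX(\sigma)$ is the descent set of the word $w$ obtained by barring the excedance values, with barred letters ordered below unbarred ones; one can sanity-check the ordering convention on $\sigma = 2\,1$, where $w = \bar{2}\,1$ has no descent and indeed $|\DEX(\sigma)| = \des(\sigma)-1 = 0$ and $\maj(\sigma)-\exc(\sigma) = 0$. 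Your master identity $[\,w_i > w_{i+1}\,] - [\,\sigma(i) > \sigma(i+1)\,] = e_{i+1} - e_i$ (with $e_i = [\,\sigma(i) > i\,]$) checks out in all four cases, and the two mixed cases, where all the content lies, rest on forced inequalities that are genuinely valid: if $e_i = 1$ and $e_{i+1} = 0$ then $\sigma(i) \ge i+1 \ge \sigma(i+1)$ with both equalities impossible simultaneously since values are distinct, and if $e_i = 0$ and $e_{i+1} = 1$ then $\sigma(i) \le i < i+2 \le \sigma(i+1)$. The unweighted sum telescopes to $e_n - e_1 = -[\,\sigma(1) \ne 1\,]$ (using $e_n = 0$, which always holds), giving the cardinality statement, and the weighted sum $\sum_{i=1}^{n-1} i\,(e_{i+1}-e_i) = -\sum_{j=1}^{n-1} e_j = -\exc(\sigma)$ is a correct Abel-summation computation, again using $e_n = 0$. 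The local analysis is in the same spirit as the verification in \cite{SW2}, but your packaging of both identities of the lemma as the weight-$0$ and weight-$i$ sums of a single per-position quantity is tidier than the direct case bookkeeping one would otherwise write, and it makes transparent why the two formulas hold together.
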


\begin{proof}[Proof of Theorem \ref{alg-thm}]
By palindromicity,  we have
\begin{equation*}
\sum_{\sigma\in\S n} F_{n,\DES_2(\sigma)} t^{\des(\sigma^{-1})} = \sum_{\sigma\in\S n} F_{n,\ASC_2(\sigma)}t^{\des(\sigma^{-1})}
\end{equation*}
Thus by Theorem \ref{SW-result} we have
\begin{equation*}
\sum_{\sigma \in \S n} F_{n,\DEX(\sigma)}t^{\exc(\sigma)} = \sum_{\sigma \in \S n} F_{n,\ASC_2(\sigma)}t^{\des(\sigma^{-1})}.
\end{equation*}
Now apply specializations to the above. By Lemma 5.2 of \cite{Gessel} (see Lemma 2.1 of \cite{SW2}), we get
\begin{equation*}
\sum_{\sigma \in \S n} q^{\Sigma\DEX(\sigma)}p^{|\DEX(\sigma)|}t^{\exc(\sigma)} =
\sum_{\sigma \in \S n} q^{\Sigma\ASC_2(\sigma)} p^{\asc_2(\sigma)} t^{\des(\sigma^{-1})}.
\end{equation*}
Apply Lemma \ref{dex}.
\begin{align*}
\sum_{\sigma \in \S n : \sigma(1) = 1} q^{\maj(\sigma) - \exc(\sigma)} p^{\des(\sigma)} t^{\exc(\sigma)} + \sum_{\sigma\in\S n : \sigma(1) \ne 1} q^{\maj(\sigma) - \exc(\sigma)} p^{\des(\sigma) - 1} t^{\exc(\sigma)} =\\
\sum_{\sigma\in\S n : \sigma(1) = 1} q^{\amaj_2(\sigma)} p^{\widetilde{\asc_2}(\sigma)} t^{\des(\sigma^{-1})} + \sum_{\sigma \in \S n : \sigma(1) \ne 1} q^{\amaj_2(\sigma)} p^{\widetilde{\asc_2}(\sigma) - 1}t^{\des(\sigma^{-1})}
\end{align*}
Hence we just need to show that
\begin{equation*}
\sum_{\sigma\in\S n : \sigma(1) = 1} q^{\maj(\sigma) - \exc(\sigma)} p^{\des(\sigma)} t^{\exc(\sigma)} = \sum_{\sigma \in \S n : \sigma(1) = 1} q^{\amaj_2(\sigma)} p^{\widetilde{\asc_2}(\sigma)} t^{\des(\sigma^{-1})}.
\end{equation*}
If we assume by induction that this is true for $n-1$, then we have
\begin{align*}
\sum_{\sigma \in \S n : \sigma(1) = 1} q^{\maj(\sigma) - \exc(\sigma)} p^{\des(\sigma)} t^{\exc(\sigma)}
&= \sum_{\sigma\in\S{n-1}} q^{\maj(\sigma) - \exc(\sigma)}(pq)^{\des(\sigma)}t^{\exc(\sigma)}\\
&= \sum_{\simga\in\S{n-1}} q^{\amaj_2(\sigma)}(pq)^{\widetilde{\asc_2}(\sigma)}t^{\des(\sigma^{-1})}
\end{align*}
But then
\begin{align*}
&\sum_{\sigma\in\S{n-1}}q^{\amaj_2(\sigma)}(pq)^{\widetilde{\asc_2}(\sigma)}t^{\des(\sigma^{-1})}\\
=& \sum_{\sigma\in\S{n-1} : \sigma(1) = 1} q^{\amaj_2(\sigma)} (pq)^{\asc_2(\sigma)} t^{\des(\sigma^{-1})}
+ \sum_{\sigma\in\S{n-1} : \sigma(1)\ne 1} q^{\amaj_2(\sigma)}(pq)^{\asc_2(\sigma)+1}t^{\des(\sigma^{-1})}\\
=& \sum_{\sigma \in \S n : \sigma(1) = 1, \sigma(2) = 2} q^{\amaj_2(\sigma)} p^{\asc_2(\sigma)} t^{\des(\sigma^{-1})}
+ \sum_{\sigma\in\S n : \sigma(1) = 1, \sigma(2) \ne 2} q^{\amaj_2(\sigma)} p^{\asc_2(\sigma)} t^{\des(\sigma^{-1})}\\
=& \sum_{\sigma\in\S n, \sigma(1) = 1} q^{\amaj_2(\sigma)} p^{\widetilde{\asc_2}(\sigma)} t^{\des(\sigma^{-1})}.
\end{align*}
So we are done.
\end{proof}

\section{Acknowledgements}
We thank Richard Stanley and Michelle Wachs for
helpful discussions. We also thank MIT SPUR program (the Summer Program in Undergraduate Research of the MIT Mathematics Department).

\bigskip

\filbreak \noindent Travis Hance\\
Department of Mathematics\\
Massachusetts Institute of Technology\\
Cambridge, MA 02139\\
{\tt tjhance@MIT.EDU}
\bigskip

\filbreak\noindent Nan Li\\
Department of Mathematics\\
Massachusetts Institute of Technology\\
Cambridge, MA 02139\\
{\tt nan@math.mit.edu}
\end{document}